\def\md{\mathop{}\mathopen\mathrm{d}}
\newtheorem{proposition}{Proposition}[section]
\newtheorem{remark}{Remark}[section]
\numberwithin{equation}{section}
\title{A structure-preserving collisional particle method for the Landau kinetic equation}
\author[a]{
Kai Du \thanks{Email: kdu@fudan.edu.cn}
}
\author[b]{
Lei Li \thanks{Email: leili2010@sjtu.edu.cn}
}
\author[a]{
Yongle Xie \thanks{Email: 24110840016@m.fudan.edu.cn}
}
\author[b]{
Yang Yu \thanks{Email: yuyang2357@sjtu.edu.cn}
}
\affil[a]{Shanghai Center for Mathematical Sciences, Fudan University, Shanghai 200438, China
}
\affil[b]{School of Mathematical Sciences, Institute of Natural Sciences, MOE-LSC, Shanghai Jiao Tong University, Shanghai 200240, China}
\date{}
\begin{document}

\maketitle

\begin{abstract}
In this paper, we propose and implement a structure-preserving stochastic particle method for the Landau equation. The method is based on a particle system for the Landau equation, where pairwise grazing collisions are modeled as diffusion processes. By exploiting the unique structure of the particle system and a spherical Brownian motion sampling, 
the method avoids additional temporal discretization of the particle system, ensuring that the discrete-time particle distributions exactly match their continuous-time counterparts. 
The method achieves $O(N)$ complexity per time step and preserves fundamental physical properties, including the conservation of mass, momentum and energy, as well as entropy dissipation. It demonstrates strong long-time accuracy and stability in numerical experiments.
Furthermore, we also apply the method to the spatially non-homogeneous equations through a case study of the Vlasov--Poisson--Landau equation.
\end{abstract}

\section{Introduction}
The Landau equation is a fundamental kinetic equation that describes the evolution of the distribution of charged particles in a collisional plasma where grazing collisions are predominant \cite{landau1961kinetic,villani2002review}.
In the space-homogeneous case, the Landau equation governs the evolution of the density $f(t,v)$ of particles with velocity $v$ at time $t$:
\begin{equation}\label{spatially homogeneous Landau equation}
    \partial_t f=
    Q(f,f):=\frac{1}{2}\nabla_{v}\cdot\bigg( \int_{\mathbb{R}^d}
    A(v-v_{\ast})(f(v_{\ast})\nabla_{v}f(v)-f(v)\nabla_{v_\ast}f(v_\ast))\md v_\ast\bigg).
\end{equation}
The collision kernel $A$ is given by
\begin{equation*}
    A(z)=\Lambda|z|^{\gamma}\big(|z|^{2}I_d-z\otimes z\big),
\end{equation*}
where $\Lambda>0$ is the collision strength, $I_d$ is the identity matrix,
and the parameter $\gamma$ can take values within the range $(-d-1,1]$.
The most important case is when $d=3$ and $\gamma=-3$ that corresponds to the original Landau equation for charged particles interacting with Coulomb potentials, derived from the Boltzmann equation in the grazing collision limit.

The well-posedness of the Landau equation has been extensively studied in the literature (see e.g. \cite{villani1998spatially,desvillettes2000spatially,fournier2009well,fournier2021stability} and references therein), covering various aspects of existence, uniqueness, and regularity under different conditions. 
Recently, Guillen and Silvestre \cite{guillen2023landau} achieved a significant breakthrough by proving the global well-posedness of the Landau equation with smooth initial data. Their paper also provides a detailed overview of the historical developments and key references in this area.

Various numerical methods have been developed for solving the Landau equation, including finite difference schemes \cite{whitney1970finite}, the Fourier--Galerkin spectral method \cite{pareschi2000fast}, the direct simulation Monte Carlo method \cite{bird1976molecular,bobylev2000theory,YAN201618,lei2024adaptivesamplingaccelerateshybrid}, and particle methods \cite{fournier2009particle,carrillo2020particle,carrillo2021random}. For comprehensive reviews on this topic, we refer readers to \cite{buet2001comparison,dimarco2015numerical,carrillo2020particle, bobylev2024}. 
Among these, particle methods have seen significant advancements over the past few decades. 
These methods approximate the solution as a linear combination of Dirac $\delta$-functions centered at particle locations.

Particle methods are categorized into deterministic and stochastic approaches. 
In deterministic particle methods, particle locations and weights evolve based on ODE systems derived from the weak formulation of the target equation. 
Carrillo et al. \cite{carrillo2020particle} proposed a gradient-flow-based deterministic particle method that preserves critical physical properties, such as conservation of mass, momentum, energy, and entropy dissipation.
The evaluation of the collision operator \( Q(f, f) \) in this method typically requires \( O(N^2) \) operations, where \( N \) is the number of velocity points.
With the help of treecode summation, it can be reduced to \( O(N \log N) \).
More recently, the random batch particle method \cite{carrillo2021random} further improved efficiency, achieving \( O(N) \) complexity.
However, these are primarily numerical approximations, which effectively recover density but do not fully capture individual particle trajectories.

A typical stochastic particle approximation for the Landau equation stems from its mean-field SDE formulation, which is thought to capture the inherent randomness of particle trajectories.  
Formally, the Landau equation corresponds to the McKean–Vlasov SDE
\begin{equation}\label{MV SDE}
    \mathrm{d}V = K \ast f(V) \, \mathrm{d}t + \sqrt{A \ast f}(V) \, \mathrm{d}W,
\end{equation}
where \( K := \nabla \cdot A =(1-d)\Lambda |z|^{\gamma}z \), and \( f \) represents both the density function of \( V \) and the solution to the Landau equation. 
A direct particle approximation of this SDE \cite{fournier2009particle} takes the form
\begin{equation}\label{direct approximation of MV eq}
    \mathrm{d}V_i = \frac{1}{N-1} \sum_{j \neq i} K(V_i - V_j) \, \mathrm{d}t 
    + \sqrt{\frac{1}{N-1} \sum_{j \neq i} A(V_i - V_j)} \, \mathrm{d}W_i, \quad i = 1, \ldots, N,
\end{equation}
Convergence results for this system are established in specific cases, as shown in \cite{fournier2016propagation, fournier2009well, carrillo2024entropy} in some cases. 
The computational cost of simulating the particle system \eqref{direct approximation of MV eq} \( O(N^2) \) per time step, which poses challenges for large-scale applications.

In this paper, we develop a stochastic particle method for the Landau equation based on our previous work \cite{du2024collision}. 
The method is built upon two key principles: collisions occur pairwise, and the grazing collision between two particles can be approximate by a diffusion process.
Specifically, in each collision time window (of width $\Delta t$), particles are randomly paired, and the motion of each particle is governed by the diffusion process:
\begin{equation}\label{collisionPS1}
    \md V_i = K(V_i-V_{\theta(i)})\md t + \sqrt{A}(V_i-V_{\theta(i)})\md W_i, \quad i=1,\cdots,N,
\end{equation}
where $V_{\theta(i)}$ denotes the paired particle of $V_i$.
In \cite{du2024collision}, we introduced this particle system and proved that, under regularity conditions on the coefficients, its empirical distribution converges to the solution of the Landau equation, with the convergence rate quantified using relative entropy.
This system can be formally interpreted as a random batch method for the 
particle system \eqref{direct approximation of MV eq} with a batch size of $p=2$ \cite{jin2020random}, 
but here the random batch structure arises naturally from physical collisions rather than as a purely approximation. We remark that the random batch approximation with any batch size $p>2$ is not a consistent approximation of McKean--Vlasov SDE~\eqref{MV SDE} unless $p\to\infty$ as $N\to\infty$. The mechanism of the approximation in \eqref{collisionPS1} is intrinsically different from \eqref{direct approximation of MV eq}.

This paper focuses on the numerical performance of the particle system, but with the particular choice of Brownian motions
\[
W_{\theta(i)}=-W_i.
\]
A key structure of the system with this choice of Brownian motions is that the relative velocity is a spherical Brownian motion. By exploiting the unique structure of the system and utilizing spherical Brownian motion sampling, we construct an exact numerical scheme that preserves the particle distribution, ensuring consistency with the continuous system (see Algorithm~\ref{Sphere BM}).  
Overall, the proposed method offers several notable advantages:
\begin{itemize}
\item \emph{Structure Preservation}. The method preserves important physical properties of the Landau equation, such as the conservation of mass, momentum, and energy, as well as the dissipation of entropy, even in its temporal discretization implementation (see Section \ref{sec:discretization}).
These properties ensure that the numerical system stays consistent with the underlying physics, making it reliable for long-time simulations. 
Numerical experiments later in the paper show its stability and accuracy over extended time periods.

\item \emph{Accurate Particle Properties}. By modeling collisions directly as pairwise interactions in the grazing regime, the method provides a detailed representation of individual particle properties, 
complementing approaches that focus on approximating the overall particle distribution. 
This helps achieve a more comprehensive understanding of particle dynamics.

\item \emph{Computational Efficiency}. The computational cost per time step is $O(N)$, making the method highly efficient for large systems.
Additionally, the grouping structure of the algorithm is naturally well-suited for parallel computation, further enhancing its scalability and efficiency.

\item \emph{Improved Sampling}. Unlike traditional direct simulation Monte Carlo (DSMC) methods, which simulate Boltzmann collisions in the grazing regime to approximate Landau collisions, our approach directly simulates the Landau collision. This eliminates an extra layer of approximation and avoids the need for rejection sampling when updating velocities, reducing computational cost and making the method simpler to implement.
\end{itemize}
This method thus provides a practical and efficient tool for solving the Landau equation, offering both physical accuracy and computational efficiency.

The rest of the paper is organized as follows. 
Section~2 introduces the collisional particle system and demonstrates that it preserves the fundamental physical properties of the Landau equation. 
Section~3 describes the construction of the exact temporal discretization scheme and discusses its distinctive features. 
As a comparison, the Euler--Maruyama scheme is also presented, showing that it does not conserve energy but instead results in an energy increase over time. 
Section~4 focuses on the numerical implementation and evaluates the performance of the proposed method through selected examples. 
Finally, Section~5 illustrates the applicability of our method to spatially non-homogeneous equations using the Vlasov--Poisson--Landau equation as a case study.

\section{The collisional particle system and its properties}\label{section 2}

We first present our particle system in detail, followed by some explanations and remarks. 
Then, we discuss several interesting properties of this system.

\setcounter{algorithm}{0}
\renewcommand{\thealgorithm}{(CP)}
\begin{algorithm}
\floatname{algorithm}{Particle System}
\caption{Collisional particle system for the Landau equation}
\begin{algorithmic}[1]\label{particle-system}
\STATE Choose a time step $\Delta t$, representing the mean duration between two consecutive collisions.
Let $t_m=m\Delta t$, where $m\ge 0$.
\STATE At \(t_0\), independently sample \(\{V_i(0)\}_{i=1}^N\) from the initial distribution \(f_0(v)\).
\STATE At each $t_m$, randomly divide the \(N\) particles into \(N/2\) pairs. 
\STATE During the time interval $[t_m, t_{m+1})$, the particle pair $(i,\theta(i))$ evolve according to:
\begin{equation}\label{particle system of i-th subgroup}
       \md V_i = -\md V_{\theta(i)} = \sigma(V_i-V_{\theta(i)})\circ \md W_i,
\end{equation}
where $W_i=-W_{\theta(i)}$ is a Brownian motion independent of those for other pairs, and
\begin{equation}
\sigma(z)=\sqrt{A(z)}:=\sqrt{\Lambda}|z|^{\gamma/2+1}\Big(I_{d}-\frac{z\otimes z}{|z|^2}\Big).
\end{equation}
The stochastic integral is interpreted in the \emph{Stratonovich} sense. 
\end{algorithmic}
\end{algorithm}

The equation \eqref{particle system of i-th subgroup} is equivalent to \eqref{collisionPS1}, which is written in It\^o's form, provided that $W_i=-W_{\theta(i)}$ in \eqref{collisionPS1} as well. 
To verify this, let $Z_i=V_i-V_{\theta(i)}$, then the dynamics of $Z_i$ is given by
$\md Z_i = 2\sigma(Z_i)\circ\md W_i$.
Using the relationship between Stratonovich and It\^o integrals, one has
\[
\begin{aligned}
\sigma(Z_i) \circ \md W_i
& = \sigma(Z_i) \md W_i + \frac{1}{2} \md [\sigma(Z_i),W_i]\\
& = \sigma(Z_i) \md W_i +  \sum_{j,\ell} \sigma_{\ell j} \partial_{\ell} \sigma_{\!,j}(Z_i) \md t \\
& = \sigma(Z_i) \md W_i +  
\big[\nabla\cdot \sigma^2 - \sigma\nabla \cdot\sigma  \big](Z_i) \md t.
\end{aligned}
\]
A direct calculation shows that 
\[
\nabla\cdot \sigma (z) = (1-d)\sqrt{\Lambda}|z|^{\gamma/2-1}z,
\]
which along with the fact $\sigma(z)z=0$ implies 
$\sigma\nabla \cdot\sigma (Z_i) \equiv 0$.
Recalling that $K = \nabla\cdot A = \nabla\cdot \sigma^2$, we confirm that \eqref{particle system of i-th subgroup} and \eqref{collisionPS1} are indeed equivalent.
We remark that the particle system was first introduced in our previous work \cite{du2024collision}. 
However, to facilitate theoretical analysis, we assumed in \cite{du2024collision} that the Brownian motions \(W_i\) in system \eqref{collisionPS1} are independent, rather than satisfying \(W_i = -W_{\theta(i)}\);
in this case, \eqref{collisionPS1} cannot be rewritten in the form of \eqref{particle system of i-th subgroup}.
This assumption was discussed in Remark 2.3 of \cite{du2024collision}.
We believe that under both settings, the particle system converges to the same limit equation. 
Establishing this convergence under more general conditions will be a focus of our future work.

From the form of equation \eqref{particle system of i-th subgroup}, we observe that \(Z_i = V_i - V_{\theta(i)}\) evolves as a spherical Brownian motion (SBM) within each collision window \([t_m, t_{m+1})\). Specifically, we have
\begin{equation}\label{eq:Z}
\md Z_i = 2\sigma(Z_i)\circ\md W_i
= 2 \sqrt{\Lambda}|Z_i|^{\gamma/2+1} \Big(I_{d} - \frac{Z_i \otimes Z_i}{|Z_i|^2}\Big) \circ \md W_i,
\end{equation}
where \(I_{d} - (Z_i \otimes Z_i)/|Z_i|^2\) is the projection operator onto the plane orthogonal to \(Z_i\). This implies that the magnitude \(|Z_i|\) remains constant during the interval \([t_m, t_{m+1})\), confirming that \(Z_i\) is indeed an SBM within this time window. 
This property is crucial because it not only ensures the \emph{well-posedness of the system} but also allows us to derive an \emph{exact} temporal discretization scheme for Particle System~\ref{particle-system} in the next section.

To intuitively explain why Particle System~\ref{particle-system} approximates the Landau equation, consider the evolution of the velocity distribution \(f^i\) for the \(i\)-th particle during \([t_m, t_{m+1})\), which satisfies:
\[
\partial_t f^i(t, v) = -\nabla_v \cdot \big(K(v - V_{\theta(i)}) f^i(t, v)\big) + \frac{1}{2} \nabla^2_v : \big(A(v - V_{\theta(i)}) f^i(t, v)\big),
\]
Here, \(f^i\) depends on both the random pairing \(\theta(i)\) and the velocity \(V_{\theta(i)}\). 
Assuming that all particles are independently and identically distributed (i.i.d.) at $t_m$ (since there is particle chaos when $N$ is large), the weak correlations during \([t_m, t_{m+1})\) imply that the expected distribution \(\Tilde{f}^i\), averaged over all pairings and velocities, approximately satisfies
\[
\partial_t \Tilde{f}^i \sim -\nabla_v \cdot \big((K \ast \Tilde{f}^i)(v) \Tilde{f}^i\big) + \frac{1}{2} \nabla^2_v : \big((A \ast \Tilde{f}^i)(v) \Tilde{f}^i\big),
\]
which corresponds to the Landau equation \eqref{spatially homogeneous Landau equation}. Thus, it is reasonable to expect that Particle System~\ref{particle-system} converges to the Landau equation.
\medskip

Next, we verify that Particle System~\ref{particle-system} preserves several key physical properties of the Landau equation, namely the conservation of mass, momentum, and energy, as well as entropy dissipation.
The conservation of mass is straightforward. For momentum and energy, we establish the following result:

\begin{proposition}[Pathwise conservation of total momentum and energy]
Under the above setting, the quantities
\[
p(t) := \sum_{i=1}^{N} V_i(t),\quad
\mathcal{E}(t) := \frac{1}{2}\sum_{i=1}^{N} |V_i(t)|^2
\]
remain constant in time.
\end{proposition}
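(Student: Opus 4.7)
The plan is to prove both conservations by working pair-by-pair, exploiting the antisymmetric structure $\mathrm{d}V_i=-\mathrm{d}V_{\theta(i)}$ together with $W_i=-W_{\theta(i)}$, and relying on the Stratonovich form \eqref{particle system of i-th subgroup} so that the ordinary chain rule is available for $|V_i|^2$.

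For the momentum $p(t)$, the observation is immediate. Group the sum over $i$ into pairs $\{i,\theta(i)\}$. Within each pair the defining equation gives $\mathrm{d}V_i+\mathrm{d}V_{\theta(i)}=0$, and pairs in different collision windows remain consistent because the pairing is merely rearranged at each $t_m$ without displacing any velocity at the join. Hence $\mathrm{d}p(t)\equiv 0$, which settles the momentum statement with no stochastic calculus required beyond noting that this identity holds pathwise.

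For the energy, I first record two elementary facts about $\sigma$: (i) $\sigma(-z)=\sigma(z)$, which is clear from its definition since $z\otimes z$ and $|z|$ are even in $z$; and (ii) $\sigma(z)z=0$, because $I_d-z\otimes z/|z|^2$ projects onto the orthogonal complement of $z$. Then, since \eqref{particle system of i-th subgroup} is a Stratonovich SDE, I can apply the Stratonovich chain rule to obtain
\[
\mathrm{d}\tfrac{1}{2}|V_i|^2 = V_i\cdot \mathrm{d}V_i = V_i\cdot \sigma(V_i-V_{\theta(i)})\circ \mathrm{d}W_i.
\]
Summing the two contributions coming from a pair $(i,j)=(i,\theta(i))$ and using $W_j=-W_i$ together with $\sigma(V_j-V_i)=\sigma(V_i-V_j)$ collapses the pair contribution to
\[
(V_i-V_j)\cdot \sigma(V_i-V_j)\circ \mathrm{d}W_i,
\]
which vanishes identically by fact (ii). Therefore $\mathrm{d}\mathcal{E}(t)=0$ on each interval $[t_m,t_{m+1})$, and since $\mathcal{E}$ is continuous in time and the re-pairing at $t_m$ does not alter any individual $V_i(t_m)$, the total energy is conserved pathwise for all $t\geq 0$.

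The only mildly delicate point is the reliance on the Stratonovich interpretation: had we used the It\^o form \eqref{collisionPS1} directly, we would pick up an It\^o correction of the form $\tfrac12\mathrm{tr}\,A(V_i-V_{\theta(i)})\,\mathrm{d}t$ in $\mathrm{d}|V_i|^2$, which does not cancel pair-by-pair and would require the earlier computation $\sigma\,\nabla\!\cdot\!\sigma=0$ (already verified in the excerpt) plus a further manipulation to eliminate. Working in Stratonovich form bypasses this bookkeeping cleanly and, combined with $\sigma(z)z=0$, reduces the proof to the two short identities above. I expect the exposition in the paper to follow exactly this pair-cancellation template.
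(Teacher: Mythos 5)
Your proof is correct and follows essentially the same route as the paper: pairwise cancellation $\mathrm{d}V_i+\mathrm{d}V_{\theta(i)}=0$ for the momentum, and the Stratonovich chain rule combined with $\sigma(z)z=0$ for the energy. The only cosmetic difference is that the paper passes through the parallelogram identity to reduce the energy claim to $\mathrm{d}|Z_i|^2=2Z_i^{T}\sigma(Z_i)\circ\mathrm{d}W_i=0$ (i.e., the relative velocity has constant magnitude), whereas you cancel $V_i\cdot\mathrm{d}V_i+V_{\theta(i)}\cdot\mathrm{d}V_{\theta(i)}$ directly; these are algebraically the same computation.
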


\begin{proof}
It follows from \eqref{particle system of i-th subgroup} that
\(
\md (V_i(t)+V_{\theta(i)}(t)) = 0
\)
for all $t$, thus
\[
\md p(t) = \sum_{i=1}^{N} \md V_i(t) 
= \frac{1}{2} \sum_{i=1}^N \md (V_i(t)+V_{\theta(i)}(t)) = 0,
\]
which implies that $p(t)$ is constant.
Consequently, one has
\[
\begin{aligned}
\md \mathcal{E}(t) & = \frac{1}{4} \sum_{i=1}^{N} \md \big(|V_i(t)|^2 + |V_{\theta(i)}(t)|^2 \big)\\
& =\frac{1}{8} \sum_{i=1}^{N} \big(\md |V_i(t) + V_{\theta(i)}(t)|^2 + \md |V_i(t) - V_{\theta(i)}(t)|^2 \big) \\
& =\frac{1}{8} \sum_{i=1}^{N} \md |V_i(t) - V_{\theta(i)}(t)|^2
= \frac{1}{8} \sum_{i=1}^{N} \md |Z_i(t)|^2.
\end{aligned}
\]
From the equation of $Z_i$ and the chain rule, one has
\[
\md |Z_i|^2 = 2Z_i\circ \md Z_i = 2Z_i^{T}\sigma(Z_i)\circ \md W_i=0,
\]
thus $\md \mathcal{E}(t) = 0$. The proof is complete.
\end{proof}

To justify the dissipation of entropy, we denote by \(f^{N}\) the joint law of the \(N\) particles in Particle System~\ref{particle-system} and define the (normalized) entropy as
\[
H_N(f^N) := \frac{1}{N} \int_{\mathbb{R}^{Nd}} f^{N} \log f^N.
\]
It follows from \eqref{particle system of i-th subgroup} that, during each time interval $[t_m,t_{m+1})$, the law \(f^{N}(t,v_1,\dots,v_N)\) formally satisfies the Liouville equation: 
\begin{equation}\label{Liouville equation 2}
	\partial_t f^{N} = \frac{1}{2} \sum_{i}\nabla_{(v_i,v_{\theta(i)})}\cdot\left(
	\begin{bmatrix}
		A(v_i-v_{\theta(i)}) & -A(v_i-v_{\theta(i)}) \\
		-A(v_i-v_{\theta(i)}) & A(v_i-v_{\theta(i)})
	\end{bmatrix}
	\nabla_{(v_i,v_{\theta(i)})}f^{N}
	\right).
\end{equation}
Assuming that $f^N$ is properly regular, one can deduce that
\begin{align*}\label{computation of dissipation of entropy}
	\begin{aligned}
		\frac{\md}{\md t} H_N(f^N)
		&=\frac{1}{N}\int \partial_t f^{N} \log f^{N}+\frac{1}{N}\int \partial_t f^{N}\\
		&= -\frac{1}{2N}\sum_{i} \int 
		\begin{bmatrix}
			A(v_i-v_{\theta(i)}) & -A(v_i-v_{\theta(i)}) \\
			-A(v_i-v_{\theta(i)}) & A(v_i-v_{\theta(i)})
		\end{bmatrix}
		\nabla_{(v_i,v_{\theta(i)})}f^{N}
		\cdot\frac{\nabla_{(v_i,v_{\theta(i)})}f^{N}}{f^{N}}
		\\ 
		&= -\frac{1}{N}\sum_{i}\int A(v_i-v_{\theta(i)})\bigg(\frac{\nabla_{v_i}f^{N}}{f^{N}}-\frac{\nabla_{v_{\theta(i)}}f^{N}}{f^{N}}\bigg)\cdot\bigg(\frac{\nabla_{v_i}f^{N}}{f^{N}}-\frac{\nabla_{v_{\theta(i)}}f^{N}}{f^{N}}\bigg)f^{N}\\
		& \le 0,
	\end{aligned}
\end{align*}
as $A$ is semi-positive definite. 
Therefore, we have
\begin{proposition}[Entropy dissipation]
	Under the above setting, the quantity \(H_N(f^N)\) decays in time.
\end{proposition}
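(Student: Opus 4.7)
The plan is to formalize the computation already displayed immediately before the proposition. Within each collision window $[t_m,t_{m+1})$ the joint law $f^N$ satisfies the Liouville equation \eqref{Liouville equation 2}, so I would work window by window and then patch the results across the resampling times $t_m$.

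First I would differentiate $H_N(f^N)$ in $t$. The $\int \partial_t f^N\,\md v$ piece vanishes by mass conservation, which follows from the divergence form of \eqref{Liouville equation 2} and an integration by parts, assuming adequate decay of $f^N$ at infinity. The remaining piece is $\frac{1}{N}\int \partial_t f^N\,\log f^N\,\md v$. Substituting \eqref{Liouville equation 2} and integrating by parts in $(v_i,v_{\theta(i)})$ yields
\[
\frac{\md}{\md t}H_N(f^N) = -\frac{1}{2N}\sum_i\int \begin{bmatrix} A(z_i) & -A(z_i)\\ -A(z_i) & A(z_i)\end{bmatrix}\nabla_{(v_i,v_{\theta(i)})} f^N\cdot \frac{\nabla_{(v_i,v_{\theta(i)})}f^N}{f^N}\,\md v,
\]
with $z_i=v_i-v_{\theta(i)}$. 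The block structure collapses the integrand to $A(z_i)\xi_i\cdot\xi_i\,f^N$, where $\xi_i=(\nabla_{v_i}-\nabla_{v_{\theta(i)}})\log f^N$. Since $A(z)=\Lambda|z|^\gamma(|z|^2 I_d-z\otimes z)$ is symmetric positive semi-definite for every $z$ (its non-zero eigenvalues are $\Lambda|z|^{\gamma+2}$ of multiplicity $d-1$), each summand is non-positive, so $H_N(f^N)$ is non-increasing on the interior of each collision window.

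Next I would verify continuity of $H_N(f^N)$ across the resampling instants $t_m$. The new pairing is drawn independently of the particle velocities, so it does not modify the instantaneous joint law of $(V_1(t_m),\dots,V_N(t_m))$; it only determines which block Liouville equation governs the next window. Hence $H_N(f^N)$ is continuous at every $t_m$ and globally monotone non-increasing on $[0,\infty)$.

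The main obstacle is the regularity behind the formal computation. One needs $f^N>0$ together with enough smoothness and decay at infinity to legitimize the integration by parts and to handle $\log f^N$, while the kernel $A$ degenerates on $\{v_i=v_{\theta(i)}\}$ and is even singular there when $\gamma<0$, including the physically relevant Coulomb case $\gamma=-3$. A fully rigorous argument would regularize both the initial datum and the kernel, establish the entropy inequality for the regularized system, and pass to the limit using lower semicontinuity of relative entropy; the excerpt sidesteps this by the blanket assumption that $f^N$ is \emph{properly regular}.
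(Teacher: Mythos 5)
Your argument is essentially identical to the paper's own proof: the paper derives the same entropy dissipation identity from the Liouville equation \eqref{Liouville equation 2} via integration by parts, collapses the block quadratic form to $A(z_i)\xi_i\cdot\xi_i$ with $\xi_i$ the difference of log-gradients, and concludes from the positive semi-definiteness of $A$, under the same blanket regularity assumption on $f^N$. Your additional remarks on continuity of $H_N(f^N)$ across the resampling times $t_m$ and on the regularization needed for full rigor are correct refinements that the paper leaves implicit (deferring the analysis to future work, as noted in its Remark 2.1).
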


\begin{remark}
Although the equation \eqref{Liouville equation 2} is a linear \emph{degenerate} parabolic equation, its classical well-posedness can follow from standard PDE theory. This has been discussed by Guillen and Silvestre~\cite[Sec. 3]{guillen2023landau} for the case of 
$N=2$, and the general case is essentially similar. 
Since this paper focuses on the implementation of the algorithm, we defer a detailed analysis to future work.
\end{remark}

\section{Discretization schemes}\label{sec:discretization}

Particle System~\ref{particle-system} is defined in continuous time. In practical implementations, we need to discretize it in time. The simplest approach is to use the Euler–Maruyama (EM) scheme (see Algorithm~\ref{EM}). 
However, as we will show later, the EM scheme does not 
have the desired structure preservation property: 
while the conservation of mass and momentum is automatically satisfied, the EM discretization fails to preserve the total energy of the particle system. 

Fortunately, the unique structure of the system allows us to construct an exact time discretization scheme (see Algorithm \ref{Sphere BM}). Unlike most other methods, our time discretization algorithm introduces no additional errors. 
This ensures that the probability distribution of the particles at the discrete time points matches \emph{exactly} with that of the original continuous-time system, thereby preserving all the system’s desirable physical properties.

The key to achieving this lies in the fact that the relative velocity \(Z_i = V_i - V_{\theta(i)}\) evolves as a spherical Brownian motion. 
According to Stroock’s representation, a standard spherical Brownian motion \(Y_t\) on \(\mathbb{S}^{d-1}\) satisfies the following SDE:
\[
\md Y_t = (I_d - Y_t \otimes Y_t) \circ \md W_t.
\]
Comparing this with \eqref{eq:Z}, it follows that the rescaled spherical Brownian motion \(|Z_i| Y_{kt}\), with time-scaling coefficient \(k = 4\Lambda |Z_i|^{\gamma}\), provides a (weak) solution to \eqref{eq:Z}.


\setcounter{algorithm}{0}
\renewcommand{\thealgorithm}{\arabic{algorithm}}
\begin{algorithm}
	\caption{Spherical Brownian motion (SBM) scheme}
	\label{Sphere BM}
	\begin{algorithmic}[1]
		\REQUIRE Particle number $N$, initial velocity of particles $v_i^{0}$ $(i=1,\cdots, N)$, time step $ \Delta t$, terminal time $T$.
		\FOR{$n=1: \lceil T/\Delta t \rceil$}
		\STATE Randomly divide $N$ particles into $N/2$ pairs.
		\STATE For each pair, compute
		\[
		z = v^{n}_{i} - v^{n}_{\theta(i)}, \quad s = v^{n}_{i} + v^{n}_{\theta(i)}, \quad e_{z} = {z}/{|z|}.
		\]
		\STATE Calculate the time-scaling coefficient 
		$k = 4\Lambda |z|^{\gamma}$.
		\STATE  Simulate a standard SBM on $\mathbb{S}^{d-1}$ starting from $e_z$ with a sampling time $k\Delta t$. 
		Denoted the result as $e'_z$. 
		\STATE Compute the post-collision velocity difference $z'=|z|e'_z$, and update the velocities 
		$$
		v_i^{n+1} = \frac{s+z'}{2}, \quad v_{\theta(i)}^{n+1} = \frac{s-z'}{2}.
		$$
		\ENDFOR
		\STATE \textbf{Return} the velocities at terminal time: \(v_i^{T / \Delta t}\) (\(i = 1, \ldots, N\)).
	\end{algorithmic}
\end{algorithm}

In practice, we use existing methods \cite{Jenkins_2017exactWF,Mijatovi__2020sphereBM} for the exact or approximate simulation of SBM. 
For instance, in two dimensions, SBM on the unit circle can be simulated exactly using the explicit formula \((\cos(B_t), \sin(B_t))\), where \(B_t\) is standard Brownian motion. 
For higher dimensions, SBM increments can still be simulated exactly by decomposing the motion into radial and angular components, as described in \cite{Jenkins_2017exactWF,Mijatovi__2020sphereBM}. 
The angular component is uniformly distributed on \(\mathbb{S}^{d-2}\), while the radial component is governed by the Wright--Fisher diffusion. 
The Wright--Fisher distribution can be either exactly simulated or well-approximated when the time step \(\Delta t\) is small (e.g., \(\Delta t \leq 0.05\)).
\medskip

Next, we present the Euler--Maruyama scheme for the system, and discuss the energy errors produced by this discretization.

\begin{algorithm}[H]
	\caption{Euler--Maruyama (EM) scheme}
	\label{EM}
	\begin{algorithmic}[1]
		\REQUIRE Particle number \(N\), initial velocities \(v_i^{0}\) (\(i=1, \ldots, N\)), time step \(\Delta t\), terminal time \(T\).
		\FOR{\(n =1: \lceil T / \Delta t \rceil\)}
		\STATE Randomly divide \(N\) particles into \(N/2\) pairs.
		\STATE For each pairs, compute \(z = v^{n}_{i} - v^{n}_{\theta(i)}\), sample \(\Delta W_{i} \sim \mathcal{N}(0, \sqrt{\Delta t} I_d)\), 
		calculate:
			\[
			Dv = K(z)\Delta t + \sqrt{A(z)}\Delta W_{i}.
			\]
		and update the velocities:
			$$	v^{n+1}_{i} = v^{n}_{i} + Dv, \quad v^{n+1}_{\theta(i)} = v^{n}_{\theta(i)} - Dv.$$
		\ENDFOR
		\STATE \textbf{Return} the velocities at terminal time: \(v_i^{T / \Delta t}\) (\(i = 1, \ldots, N\)).
	\end{algorithmic}
\end{algorithm}

In the EM scheme, the conservation of momentum is automatically preserved. 
For the energy, we have following result.
\begin{proposition}
    In Algorithm \ref{EM}, the following holds:
    \begin{align*}
|v_i^{n+1}|^2+|v_{\theta(i)}^{n+1}|^2&=|v_i^{n}|^2+|v_{\theta(i)}^{n}|^2+2\Lambda^2(d-1)^2|z_i|^{2\gamma+2}\Delta t^2\\
&\quad\quad + 2\Lambda |z_i|^{\gamma+2}\left(|\Pi(z_i)\xi_i|^2-(d-1)\right)\Delta t
    \end{align*}
    where $\Pi(z):=I_d-z\otimes z/|z|^2$ is the projection matrix, and $\xi_i=\Delta W_i/\sqrt{\Delta t}$ is a standard Gaussian random vector.
    As a consequence, the total energy is increasing.
\end{proposition}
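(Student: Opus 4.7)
The plan is a direct expansion. Since the EM update gives $v_i^{n+1} = v_i^n + Dv$ and $v_{\theta(i)}^{n+1} = v_{\theta(i)}^n - Dv$ with $Dv = K(z_i)\Delta t + \sqrt{A(z_i)}\Delta W_i$ and $z_i = v_i^n - v_{\theta(i)}^n$, the pairwise energy difference telescopes to
\[
|v_i^{n+1}|^2 + |v_{\theta(i)}^{n+1}|^2 - |v_i^n|^2 - |v_{\theta(i)}^n|^2 = 2\, z_i \cdot Dv + 2|Dv|^2.
\]
Everything then reduces to two structural facts already used in Section~2: $K(z) = (1-d)\Lambda|z|^{\gamma} z$ is parallel to $z$, while $\sqrt{A(z)} = \sqrt{\Lambda}|z|^{\gamma/2+1}\Pi(z)$ annihilates $z$ because $\Pi(z)z = 0$.

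The execution is then mechanical. For the linear term I would use $z_i^\top \sqrt{A(z_i)}\Delta W_i = 0$ to obtain $2 z_i \cdot Dv = -2(d-1)\Lambda|z_i|^{\gamma+2}\Delta t$. For the quadratic term I would expand
\[
|Dv|^2 = |K(z_i)|^2\Delta t^2 + 2 K(z_i)^\top \sqrt{A(z_i)}\Delta W_i\,\Delta t + \Delta W_i^\top A(z_i)\Delta W_i;
\]
the cross term vanishes by the same orthogonality (since $K(z_i)$ is parallel to $z_i$), the deterministic piece contributes $(d-1)^2\Lambda^2|z_i|^{2\gamma+2}\Delta t^2$, and, using $A(z) = \Lambda|z|^{\gamma+2}\Pi(z)$ together with $\Pi(z)^2 = \Pi(z)$, the diffusion piece equals $\Lambda|z_i|^{\gamma+2}|\Pi(z_i)\xi_i|^2\Delta t$, where $\xi_i = \Delta W_i/\sqrt{\Delta t}$. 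Combining these and collecting the two $O(\Delta t)$ contributions yields exactly the claimed identity.

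For the consequence, I will take conditional expectation given the state at $t_n$. Since $\xi_i \sim \mathcal{N}(0,I_d)$ is independent of $z_i$ and $\Pi(z_i)$ is an orthogonal projection of rank $d-1$, one has $\mathbb{E}[|\Pi(z_i)\xi_i|^2 \mid z_i] = \mathrm{tr}(\Pi(z_i)) = d-1$. The stochastic $O(\Delta t)$ contribution therefore has zero conditional mean, leaving only the non-negative correction $2(d-1)^2\Lambda^2|z_i|^{2\gamma+2}\Delta t^2$. Summing over all $N/2$ pairs then shows that the expected total energy is non-decreasing from $t_n$ to $t_{n+1}$, and strictly increasing whenever at least one pair has $z_i \neq 0$. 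There is no genuine obstacle in the proof: the statement is essentially a completed square. The only point to watch will be the bookkeeping of scalar factors, in particular recognizing that $A(z) = \Lambda|z|^{\gamma+2}\Pi(z)$ so that $\Pi(z)^2=\Pi(z)$ can be used cleanly, and invoking the orthogonality $\Pi(z)z=0$ to cancel both the $z_i^\top\sqrt{A(z_i)}\Delta W_i$ term and the $K(z_i)^\top\sqrt{A(z_i)}\Delta W_i$ cross term.
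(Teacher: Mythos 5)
Your proposal is correct and is essentially the same argument as the paper's: both reduce the energy change to the orthogonality facts $\Pi(z)z=0$ (so $\sqrt{A(z_i)}\Delta W_i \perp z_i, K(z_i)$) together with the explicit values of $z_i\cdot K(z_i)$, $|K(z_i)|^2$, and $\Delta W_i^\top A(z_i)\Delta W_i$; the paper merely organizes the expansion via the parallelogram identity and momentum conservation rather than your direct expansion of $|v_i^n+Dv|^2+|v_{\theta(i)}^n-Dv|^2$. Your conditional-expectation justification of the energy increase matches the paper's Remark 3.1.
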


\begin{proof}
	The difference in velocity after one step of the EM discretization is given by
	\[
	z_i + 2K(z_i)\Delta t + 2\sigma(z_i)\Delta W_i.
	\]
	Through direct calculation, and noting that \(\sigma(z_i)\Delta W_i\) is orthogonal to both \(z_i\) and \(K(z_i)\), we have
	\begin{equation}\label{energy estimation}
		\begin{aligned}
			&\quad \left|z_i + 2K(z_i)\Delta t + 2\sigma(z_i)\Delta W_i \right|^2 \\
			&= |z_i|^2 + 4z_i \cdot K(z_i)\Delta t + 4|K(z_i)|^2\Delta t^2 + 4|\sigma(z_i)\Delta W_i|^2 \\
			&= |z_i|^2 + 4\Lambda^2(d-1)^2|z_i|^{2\gamma+2}\Delta t^2 + 4\Lambda |z_i|^{\gamma+2} \big(1 - d + |\Pi(z_i)\xi_i|^2\big)\Delta t.
		\end{aligned}
	\end{equation}
	Using the fact that momentum is conserved, we have
	\begin{align*}
		|v_i^{n+1}|^2+|v_{\theta(i)}^{n+1}|^2&=\frac{1}{2}(|v_i^{n+1}+v_{\theta(i)}^{n+1}|^2+|v_i^{n+1}-v_{\theta(i)}^{n+1}|^2)\\
		&= \frac{1}{2}(|v_i^{n}+v_{\theta(i)}^{n}|^2+|z_i+2K(z_i)\Delta t+2\sigma(z_i)\Delta W_i|^2).
	\end{align*}
	Combining this with the previous calculation in \eqref{energy estimation}, we obtain the desired result.
\end{proof}

\begin{remark}\label{remark}
	By the property of the projection matrix \(\Pi(z)\), we have:
	\[
	\mathbb{E}(|\Pi(z_i)\xi_i|^2 - (d-1)) = 0.
	\]
	Therefore, when summing the energy over all particles, the second term on the right-hand side of \eqref{energy estimation} cancels out according to the law of large numbers if the particle number \(N\) is sufficiently large. As a result, we have
	\[
	\frac{1}{2N} \sum_{i=1}^N |v_i^{n+1}|^2 - \frac{1}{2N} \sum_{i=1}^N |v_i^{n}|^2 \approx \Lambda^2(d-1)^2 \frac{1}{N} \sum_{i=1}^{N/2} |z_i|^{2\gamma+2} \Delta t^2.
	\]
	The energy growth of the EM scheme is also validated in subsequent numerical experiments, especially for the Coulomb potential case ($\gamma=-3$), where the energy will increase rapidly when $|z_i|$ is small.
\end{remark}

\section{Extension to Vlasov--Poisson--Landau equation}\label{sec:non-homo}

In this section, we illustrate the applicability of our method to spatially non-homogeneous equations, in particular the Vlasov--Poisson--Landau equation. Application to other models with Landau collision is similar. 

In spatially non-homogeneous cases, space heterogeneity generates an electric 
field if the particles are charged, which affects both the velocities and 
positions of particles. To describe this phenomenon, the 
Vlasov--Poisson--Landau equation is formulated as:
\begin{equation}
    \begin{dcases}
    \partial_t f + v \cdot \nabla_x f + E \cdot \nabla_v f = Q(f, f), \\
    -\Delta_x \phi = \rho-\rho_0, \\
    E = -\nabla_x \phi,
    \end{dcases}
    \label{Vlasov-Poisson-Landau equation}
\end{equation}
where \(f(x, v, t)\) is the particle distribution, 
$\rho(x,t)=\int_{\Omega_v} f(x,v,t)\md v$,
$\rho_0$ is the background charge density, \(\phi\) is the potential, 
and \(E\) is the electric field. 
When periodic boundary conditions are employed, the charged neutral condition must be imposed for the finiteness of the system energy so that \(\int_{\Omega_x} (\rho-\rho_0) \, dx = 0\). For example, in plasma simulations, $f$ may represent the density of electrons and then $\rho_0= \frac{1}{|\Omega_x|} \int_{\Omega_x}\rho \, dx$ represents the density of ions (considering that electrons have negative charge, one may reverse the sign of $\rho$ and thus $\phi, E$ above, while $-E$ would be used in the Vlasov equation so there would be no intrinsic difference). 

The total energy \(\mathcal{E}_{\text{total}}\) comprises kinetic energy 
\(\mathcal{E}_K\) and electric energy \(\mathcal{E}_E\), given by:
\[
\mathcal{E}_{\text{total}} = \mathcal{E}_K + \mathcal{E}_E, \quad
\mathcal{E}_K = \frac{1}{2} \int_{\Omega_x} \int_{\Omega_v} |v|^2 f \md v \md x, 
\quad \mathcal{E}_E = \frac{1}{2} \int_{\Omega_x} |E|^2 \md x.
\]
The conservation of \(\mathcal{E}_{\text{total}}\) is a critical property of 
the system.

To solve the Vlasov--Poisson--Landau equation, we adopt a time-splitting 
approach that decouples the collision step
\begin{equation}
    \partial_t f = Q(f, f),
    \label{Landau part}
\end{equation}
and the advection step
\begin{equation}
    \begin{dcases}
    \partial_t f + v \cdot \nabla_x f + E \cdot \nabla_v f = 0, \\
    -\Delta_x \phi = \rho-\rho_0, \\
    E = -\nabla_x \phi.
    \end{dcases}
    \label{Vlasov-Poisson part}
\end{equation}

Traditionally, some studies have employed the DSMC method to address the collision step in the Vlasov--Poisson--Landau equation \cite{lei2024adaptivesamplingaccelerateshybrid,YAN201618}. 
This approach involves simulating Boltzmann collisions to approximate the Landau collision. 
While it has been effectively combined with other schemes for solving the advection step, the method requires rejection sampling when updating velocities, which may increase computational cost and time.

For ~\eqref{Landau part}, we use the SBM scheme to update particle velocities in the same grid, 
preserving the kinetic energy \(\mathcal{E}_K\). 
Since particle positions 
remain unchanged during this step, the electric field and \(\mathcal{E}_E\) 
are also unchanged.

For ~\eqref{Vlasov-Poisson part}, which corresponds to the Vlasov--Poisson equation, the particle-in-cell (PIC) algorithm is widely employed to solve such equations \cite{bailo2024collisional, serikov1999particle}. 
As a preparatory step for the algorithm, the spatial domain \(\Omega_x\) is divided into a set of grids \(\Omega_k\). 
Using the electric field \(E_k\) at the center of each grid \(\Omega_k\), the electric energy \(\mathcal{E}_E\) can be approximately computed as:
\[
\mathcal{E}_E = \frac{1}{2} \sum\limits_k \|E_k\|_2^2|\Omega_k|,
\]
where increasing the number of grids improves the accuracy of \(\mathcal{E}_E\).
Let \(x_k\) denote the center of the \(k\)-th grid, \(x_i\) the position of the \(i\)-th particle, \(Q\) the total charge in the whole area, and \(q\) the charge of each particle.
The PIC algorithm proceeds in two main steps:

\emph{Step 1: Solving the Poisson equation}.
The charge density at the grid point \(x_k\) is approximated by:
\begin{align}
\label{charge density int}
    \rho(x_k)= Q\int_{\Omega_v}  f(x_k,v,t)\md v  - \rho_0\approx q\sum\limits_iS(x_k-x_i)- \rho_0=:\rho_k,
\end{align}
where  \(q=Q/N\), \(S(x)\) 
is a shape function used to approximate the \(\delta\)-function, and \(\rho_0 := {|\Omega_x|^{-1}} \int_{\Omega_x} \rho \md x\).
Then the Poisson equation \(-\Delta_x \phi = \rho-\rho_0\) 
is solved with periodic boundary conditions. 
The spectral scheme is utilized 
to obtain \(\phi\), from which the electric field at the center of the \(k\)-th grid, \(E(x_k)\), is computed as \(E(x_k) = -\nabla_x \phi(x_k)\).

\emph{Step 2: Updating velocities and positions}.
To update the particle velocities and positions, the electric field at the particle positions \(E(x_i)\) is needed. 
Using the same shape function \(S(x)\), the electric field at a general location \(x\) is interpolated as:
\begin{equation}
    E(x) =\int_{\Omega_x} E(y) S(y - x) \md y\approx \sum\limits_{k} E(x_k) S(x_k - x)|\Omega_k|.
    \label{electric field int}
\end{equation}
Subsequently, the leap-frog scheme is typically employed to update the particle velocities and positions, offering high accuracy \cite{birdsall1991particle}.

However, the leap-frog scheme in this step does not conserve the total energy \(\mathcal{E}_{\text{total}}\). 
To address this, the Vlasov--Poisson equation can be equivalently reformulated as the Vlasov--Amp\`ere equation, and the Crank--Nicolson (CN) method \cite{CHEN20117018} can be used instead. 
The CN method ensures strict conservation of both the total energy \(\mathcal{E}_{\text{total}}\) and the total charge. The specific form is formulated as:
\begin{align}
    \begin{dcases}
        \frac{E^{n+1}(x_k)-E^n(x_k)}{\Delta t}+J^{n+1/2}(x_k)=J_{\text{mean}},\\
J^{n+1/2}(x_k)=q\sum\limits_iS(x_k-x_i^{n+1/2})v_i^{n+1/2},\\
x_i^{n+1}-x_i^n=v^{n+1/2}_i\Delta t,\\
v_i^{n+1}-v_i^n=\frac{1}{2}[E^n(x_i^{n+1/2})+E^{n+1}(x_i^{n+1/2})]\Delta t,
    \label{V--A iteration}
    \end{dcases}
\end{align}
where $x^{n+1/2}_i:=\frac{1}{2}(x^{n}_i+x^{n+1}_i), \quad v^{n+1/2}_i:=\frac{1}{2}(v^{n}_i+x^{n+1}_i), \quad J_{\text{mean}}:=\frac{1}{n_0}\sum_kJ^{n+1/2}(x_k).$

By combining the SBM scheme, which preserves the kinetic energy \(\mathcal{E}_K\) in the Landau collision step, with the energy-conserving PIC algorithm, the total energy \(\mathcal{E}_{\text{total}}\) can be rigorously preserved throughout the simulation. See Algorithm \ref{PIC+Landau collision} for details.

\begin{algorithm}[!h]
		\caption{PIC + Landau Collision}
		\label{PIC+Landau collision}
  \begin{algorithmic}[1]
      \REQUIRE Particle number \(N\), initial data \((x^{(0)}_i, v^{0}_i)_{i=1}^N\), particle charge \(q\), time step \(\Delta t\), 
      terminal time \(T\), domain length \(2L\), 
      number of cells \(n_0\), grid size \(\Delta x=2L/n_0\).
      \STATE Compute the initial electric field \(E_k \ (k=1,\cdots,n_0)\) (for example using Algorithm~\ref{PIC solve electric field}).
      \FOR{\(m = 1 : \lceil T / \Delta t \rceil\)}
          \STATE For each cell \(G_k \ (k=1,\cdots,n_0)\), perform the Landau collision using the SBM scheme. \\
          If \(G_k\) contains an odd number of particles, with probability $1/2$, the extra one collides with a randomly selected one in the post-collision particles.
          \STATE Solve the implicit scheme \eqref{V--A iteration} using some iterative method.
      \ENDFOR
      \STATE \textbf{Return} final particle velocities \(v_i^{T/\Delta t}\) and positions \(x_i^{T/\Delta t} \ (i=1,\cdots,N)\).
  \end{algorithmic}		
\end{algorithm}

\section{Numerical experiments}

In this section, we validate the effectiveness of our method through numerical experiments for solving the Landau equation with different values of \(\gamma\) and dimensions. We compare our numerical solutions with the analytical Bobylev--Krook--Wu (BKW) solution for Maxwell molecules and the reference solution given in \cite{carrillo2021random} for the Coulomb potential case.
Besides, we demonstrate the reliability and accuracy of our approach for the Vlasov-Poisson-Landau equation by studying the effects of Landau collision in the phenomenon of Landau damping. 

To visualize the particle solution and compare it with the exact (or reference) solution, we construct a mollified solution \(f^N_{\epsilon}\) from the empirical measure of particle velocities as follows:
\[
f^N_{\epsilon}(v) := \psi_{\epsilon} \ast \bigg( \frac{1}{N} \sum_{i=1}^N \delta_{v_i} \bigg) = \frac{1}{N} \sum_{i=1}^N \psi_\epsilon(v - v_i),
\]
where \(\psi_{\epsilon}(x)\) is the Gaussian mollifier. 
Compared to deterministic particle methods such as \cite{carrillo2020particle, carrillo2021random}, the parameter \(\epsilon\) is only used as a post-processing parameter and can therefore be chosen more flexibly. 
For visualization purposes and consistency, we set \(\epsilon = 0.01\) uniformly across all experiments.

To measure the accuracy of our solution, we use the relative \(L_2\) error, which is computed on a uniform mesh grid with center points \(v_l^c\) of the squares as follows:
\[
\mathcal{E}_2 := \frac{\Vert f^{\text{ext}} - f_\epsilon^N \Vert_2}{\Vert f^{\text{ext}} \Vert_2} \approx \frac{\sqrt{\sum_{l=1}^{N_{\text{grid}}} h^d \left|f^{\text{ext}}(v_l^c) - f_\epsilon^N(v_l^c)\right|^2}}{\sqrt{\sum_{l=1}^{N_{\text{grid}}} h^d \left|f^{\text{ext}}(v_l^c)\right|^2}}.
\]
We also evaluate the entropy of the mollified solution \(f^N_{\epsilon}\) using the following expression:
\[
H(f^N_{\epsilon}) = \int f^N_\epsilon \log f^N_\epsilon \md v \approx \sum_{l=1}^{N_{\text{grid}}} h^d f^N_{\epsilon}(v_l^c) \log(f^N_\epsilon(v_l^c)).
\]
Recall that we have shown in Section~\ref{section 2} that the entropy of the joint distribution \(H_N(f^N)\) is monotonically decreasing. 
However, in practical computations, directly evaluating the entropy of the joint distribution of particles is infeasible because obtaining the corresponding density function is challenging. 
By smoothing the empirical distribution of the particles, the resulting mollified density function \(f^N_\epsilon\) approximates the marginal distribution of the joint distribution. 
Under the assumption that all particles are approximately i.i.d., the following relationship holds approximately:
\[
H_N(f^N) \approx H_N(f^{\otimes N}) = H(f) \approx H(f^N_\epsilon).
\]
Thus, in our numerical experiments, we compute the entropy of the mollified empirical measure \(f_\epsilon^N\) to verify the entropy dissipation property.

\subsection{2D BKW solution for Maxwell molecules}
Let $d=2$ and $\gamma=0$.
In this case, the collision kernel is 
\[A(z)=\frac{1}{8}\big(|z|^{2}I_d-z\otimes z\big),\] 
and the BKW solution is given by
$$ f(t,v)=\dfrac{1}{2\pi K}\Big(2-\dfrac{1}{K}+\dfrac{1-K}{2K^{2}}|v|^{2}\Big)\exp\Big(-\dfrac{|v|^{2}}{2K}\Big),
\quad K=1-\frac{1}{2}\exp\Big(-\frac{t}{8}\Big) .$$ 

We set \(t_0 = 0\), \(t_\text{end} = 200\), and \(\Delta t = 0.1\). 
The initial particle velocities are sampled independently from the initial distribution. 
We first use the SBM and EM schemes to solve the equation with both \(N = 10,000\) and \(N = 100,000\). 
The results are presented in Figure~\ref{Comparison of different $N$ using SBM method 2d Maxwell case}. 

Figure~\ref{Relative L2 error of different N 2d Maxwell case} shows the time evolution of the relative \(L_2\) error for the SBM and EM schemes (Algorithms~\ref{Sphere BM} and \ref{EM}, respectively) with different \(N\).
In the initial period, the relative \(L_2\) errors of the two methods are similar. 
However, after running for a longer time, the error of the EM scheme accumulates and becomes more noticeable. 
Figure~\ref{energy of different N 2d Maxwell case} shows that the energy of the EM scheme grows approximately linearly over time, consistent with our theoretical results, whereas the SBM scheme preserves energy.
Furthermore, Figures~\ref{energy of different N 2d Maxwell case} and \ref{entropy of different N 2d Maxwell case mother and son} confirms that the SBM scheme satisfies both energy conservation and entropy dissipation properties.
 \begin{figure}[!t]
        \centering
        \subfigure[Relative $L_2$ error]{
        \centering
        \includegraphics[height=50mm,width=49mm]{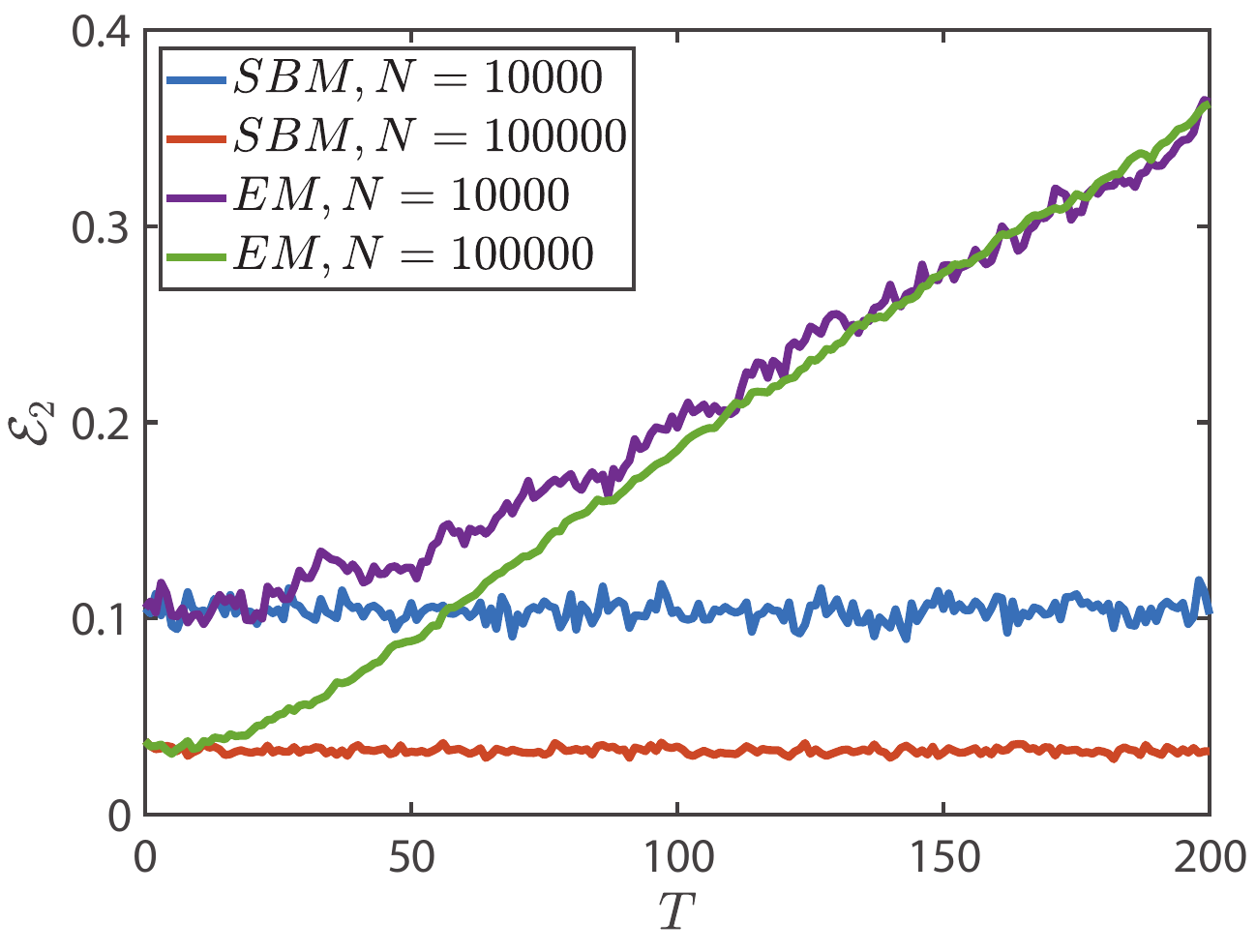}
        \label{Relative L2 error of different N 2d Maxwell case}
    }
    \centering
         \subfigure[Energy]{
        \centering
        \includegraphics[height=50mm,width=49mm]{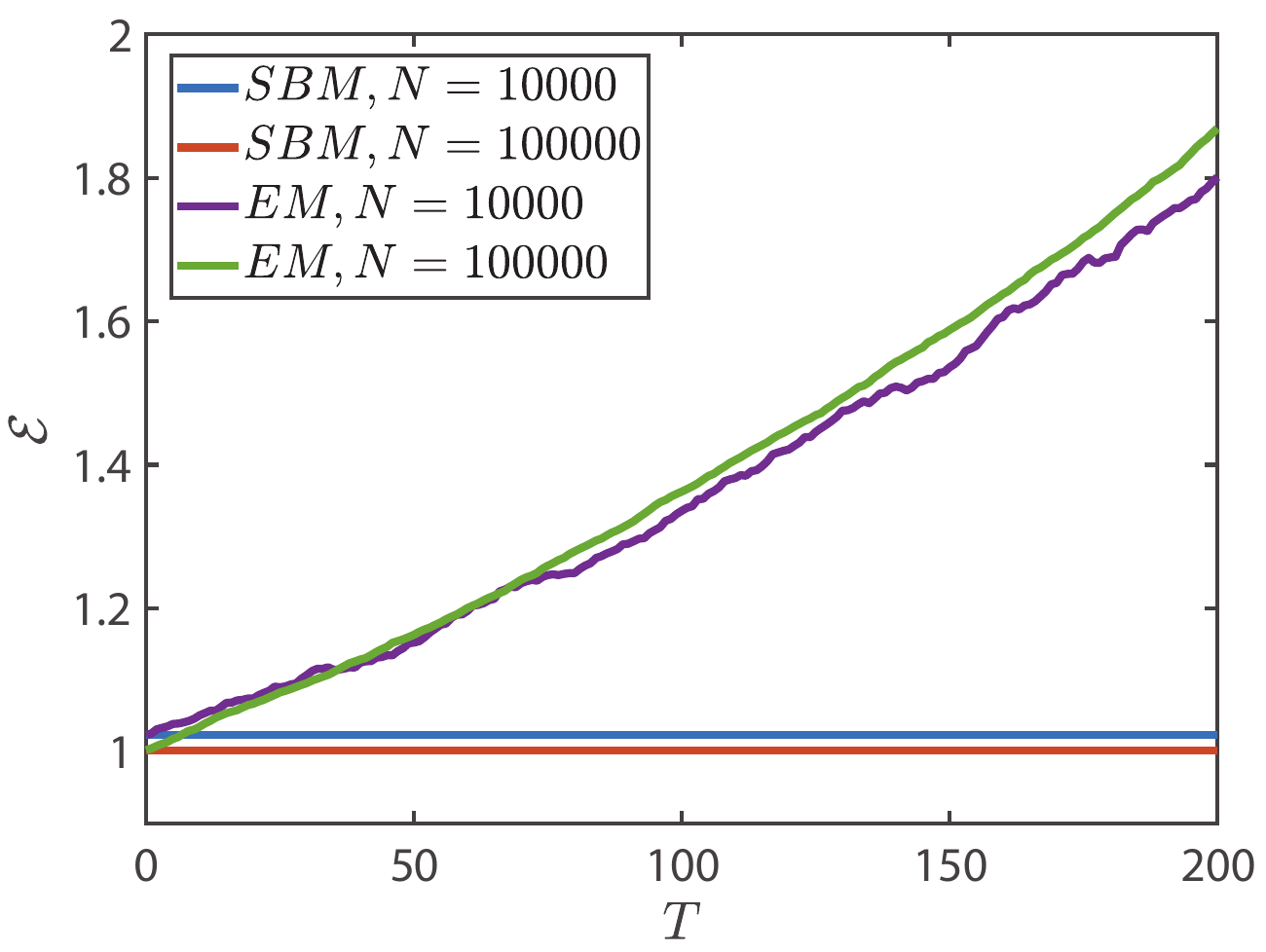}
        \label{energy of different N 2d Maxwell case}
    }
	\centering
	\subfigure[Entropy]{
        \centering
        \includegraphics[height=50mm,width=50mm]{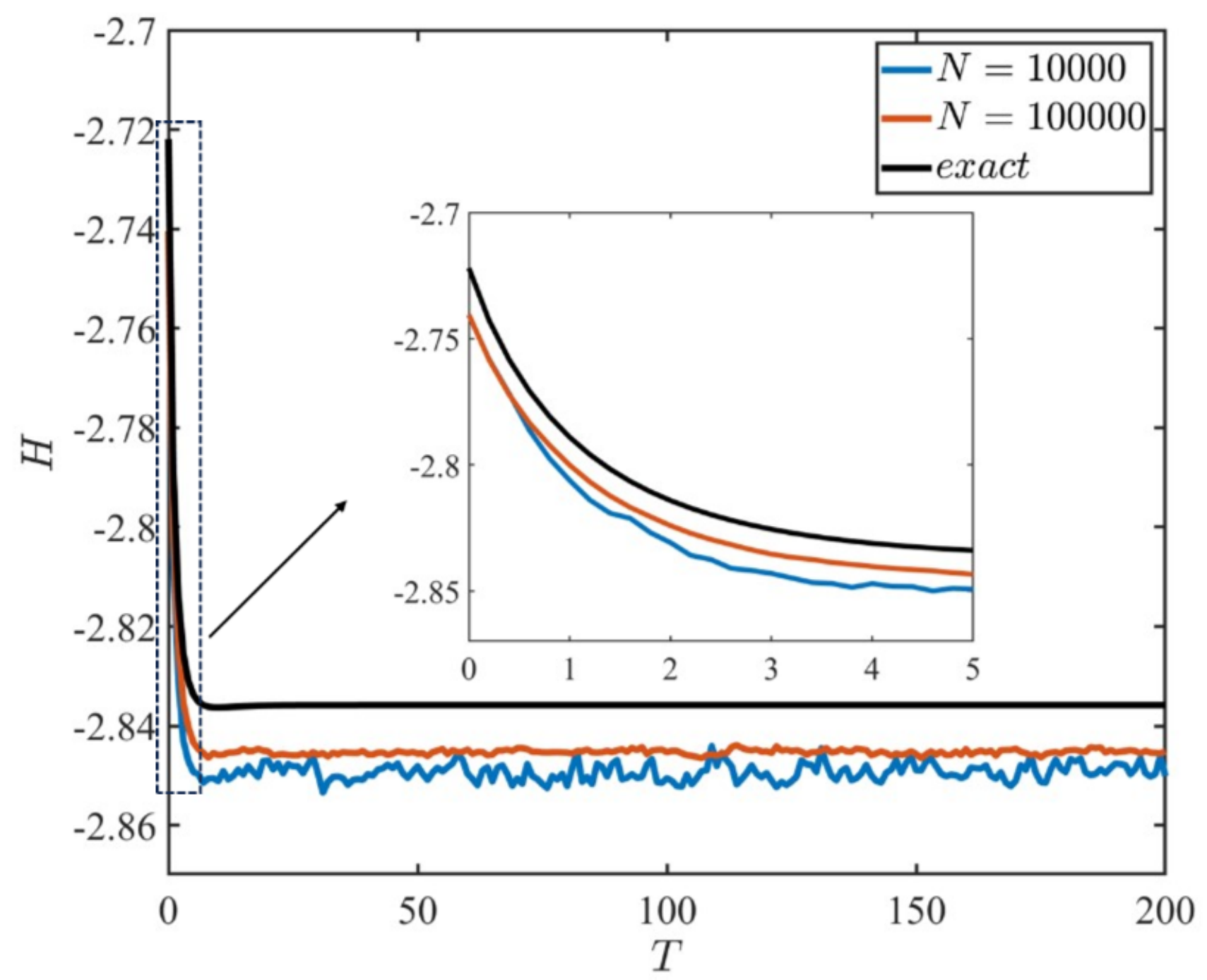}
        \label{entropy of different N 2d Maxwell case mother and son}
    }
		\centering
		\caption{Time evolution of relative $L_2$ error, energy and entropy for different $N$, where (a)(b) shows the results of both SBM and EM schemes and (c) show the results of SBM scheme.}
		\label{Comparison of different $N$ using SBM method 2d Maxwell case}
    \end{figure}

Next, we evaluate the order of accuracy and CPU time per time step for the SBM scheme. 
Figure~\ref{order of accuracy 2dmax} shows the relative \(L_2\) error at \(t = 5\) for different \(N\) and \(\Delta t\), indicating that the errors scale approximately as \(N^{-1/2}\). 
Additionally, Figure~\ref{CPU time 2DBKW} illustrates that the CPU time per time step for the SBM scheme increases linearly with \(N\).
    \begin{figure}[!t]
        \subfigure[Order of accuracy at $t = 5$]{
        \centering
        \includegraphics[height=50mm]{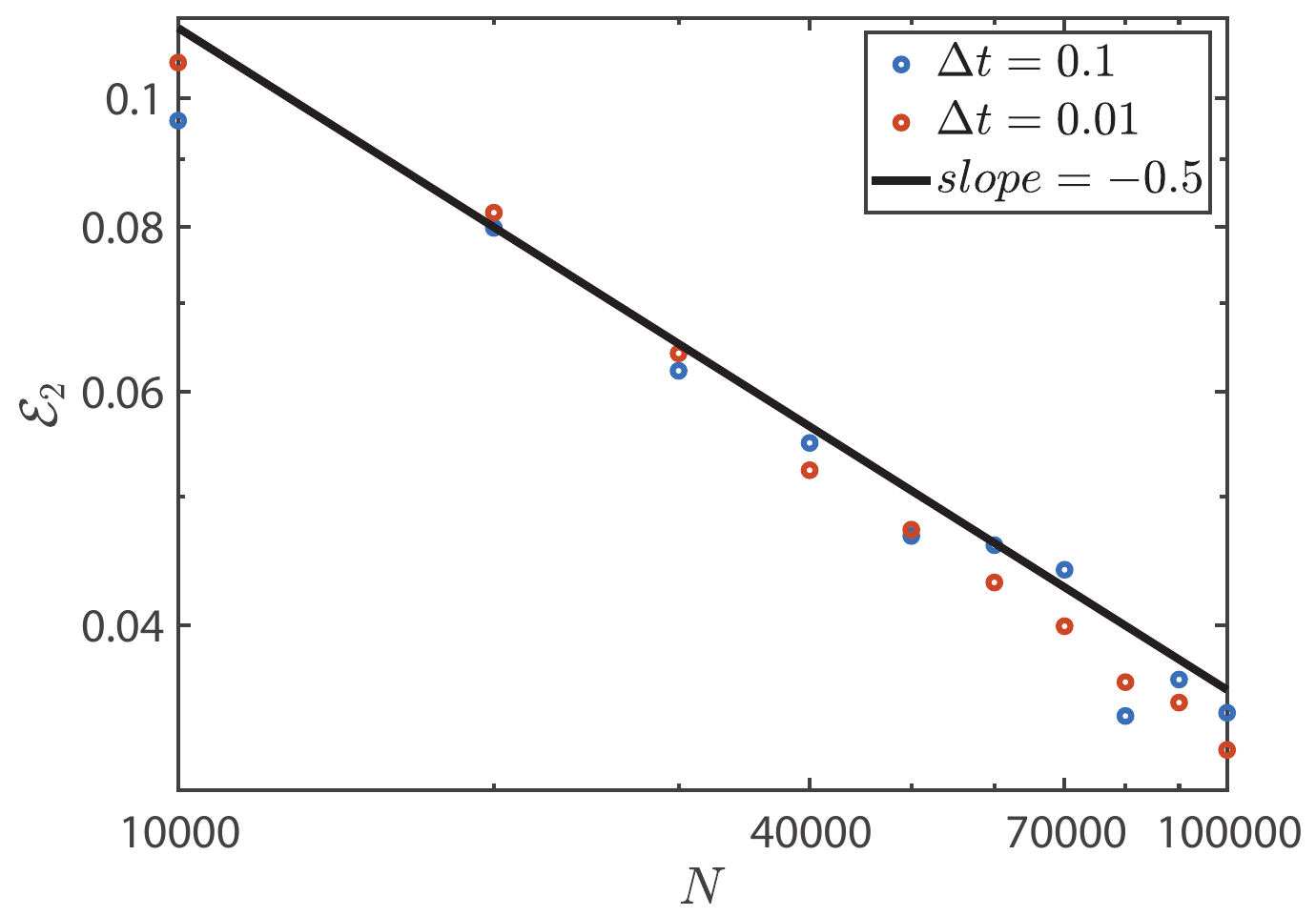}
        \label{order of accuracy 2dmax}
    }
           \subfigure[CPU time per time step (in seconds) with respect to particle number $N$]{
         \centering
           \includegraphics[height=50mm]{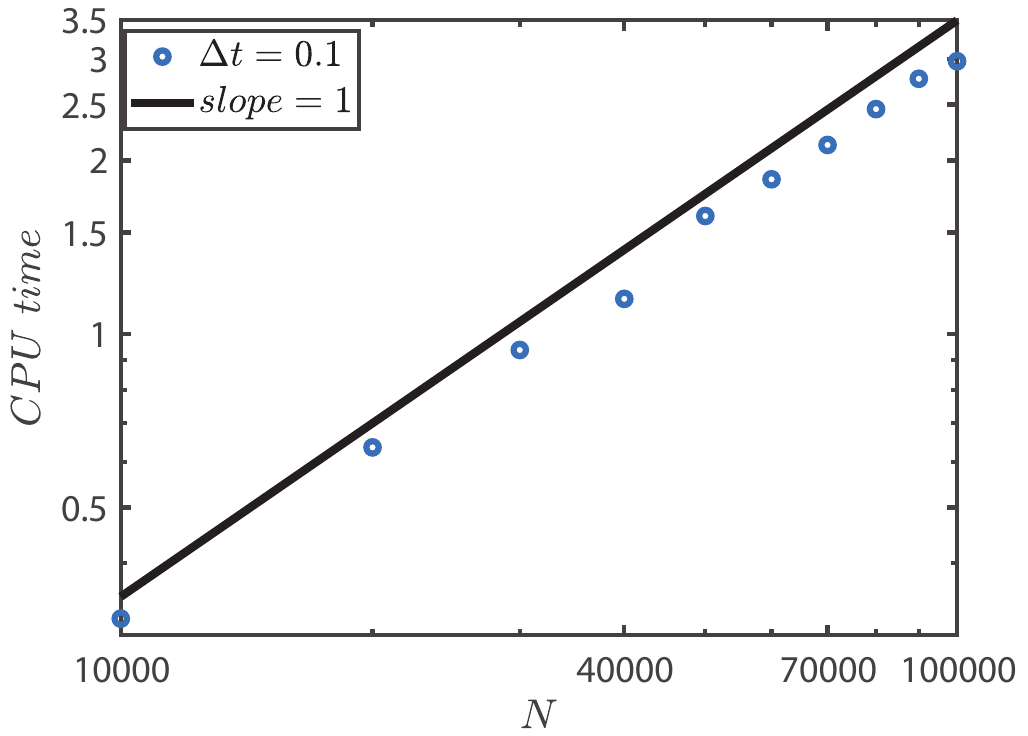}
           \label{CPU time 2DBKW}
           }
		\centering
		\caption{Convergence rate (left) and CPU time (right) of SBM scheme}
		\label{Convergence results (left) and CPU time (right) for 2D maxwell}
    \end{figure}

This example demonstrates that our particle system effectively approximates the spatially homogeneous Landau equation and achieves half-order accuracy as the number of particles increases. 
Notably, the SBM scheme successfully preserves both conservation properties and entropy dissipation, making it suitable for stable long-time simulations. 
The computational cost of the method is confirmed to be \(O(N)\) based on the CPU time tests.

\subsection{2D anisotropic solution with a singular kernel}
    For $\gamma=-3$ and $d=2$, the collision kernel is
    $$A(z)=\frac{1}{8|z|^{3}}\big(|z|^{2}I_d-z\otimes z\big),$$
    and the initial condition is chosen as
    $$f(0,v)=\frac{1}{4\pi}\Big[0.4\exp\Big(-\frac{(v-u_{1})^{2}}{2}\Big)+1.6\exp\Big(-\frac{(v-u_{2})^{2}}{2}\Big)\Big] ,\quad u_{1}=(-2,1), u_{2}=(1,-1).$$
    For this problem, we do not have analytical solution. 
    Here, we use the Type 1 Random Batch Method (Algorithm 3 in \cite{carrillo2021random}) with $ \Delta t=0.2,  n_{0}=200$ (particle number per dimension), $\epsilon=0.04$ (parameter for the Gaussian mollifier) as the reference solution. (The computational cost for the reference solution with larger $n_0$ would be high.)

    We set \(t_0 = 0\), \(t_\text{end} = 200\), \(\Delta t = 0.1\), and use the SBM and EM schemes to solve the equation for \(N = 10,000\) and \(N = 100,000\). 
    The time evolution results are shown in Figure~\ref{Comparison of different $N$ using SBM method 2d Coulomb case}.
    
We observe that the SBM scheme continues to perform well over long time periods, preserving the energy conservation. 
In contrast, as shown in Figure~\ref{Relative L2 error of different N 2d Coulomb case}, the EM scheme performs poorly in the Coulomb case, even over short time intervals. 
Since the collision kernel \(A\) is singular, when the velocities of a pair of collision particles are close to each other, i.e. when \(|z|\) is small, the EM scheme will suffer from significant errors and as discussed in Remark~\ref{remark}, this can lead to energy blow-up, which is demonstrated in 
Figure~\ref{energy of different N 2d Coulomb case}. 
From Figure~\ref{Relative L2 error of different N 2d Coulomb case}~\ref{energy of different N 2d Coulomb case} we observe that the SBM scheme maintains energy conservation and long-term stability of the system even in the singular kernel case.
 
    We also test the convergence order and the CPU time per time step for the SBM scheme. 
    The results, shown in Figure~\ref{convergence order and CPU time of 2d coulomb}, confirm that our method achieves half-order accuracy and \(O(N)\) computational cost per time step.

    From this example, we see that the SBM scheme can solve the Landau equation stably and preserve the structure even for the singular kernel case. In contrast, the EM scheme cannot do well even in short time if it is not equipped with a cutoff for small \(|z|\).
   
    \begin{figure}[!t]
        \centering
        \subfigure[Relative $L_2$ error]{
        \centering
        \includegraphics[width=0.42\textwidth, height=0.41\textwidth]{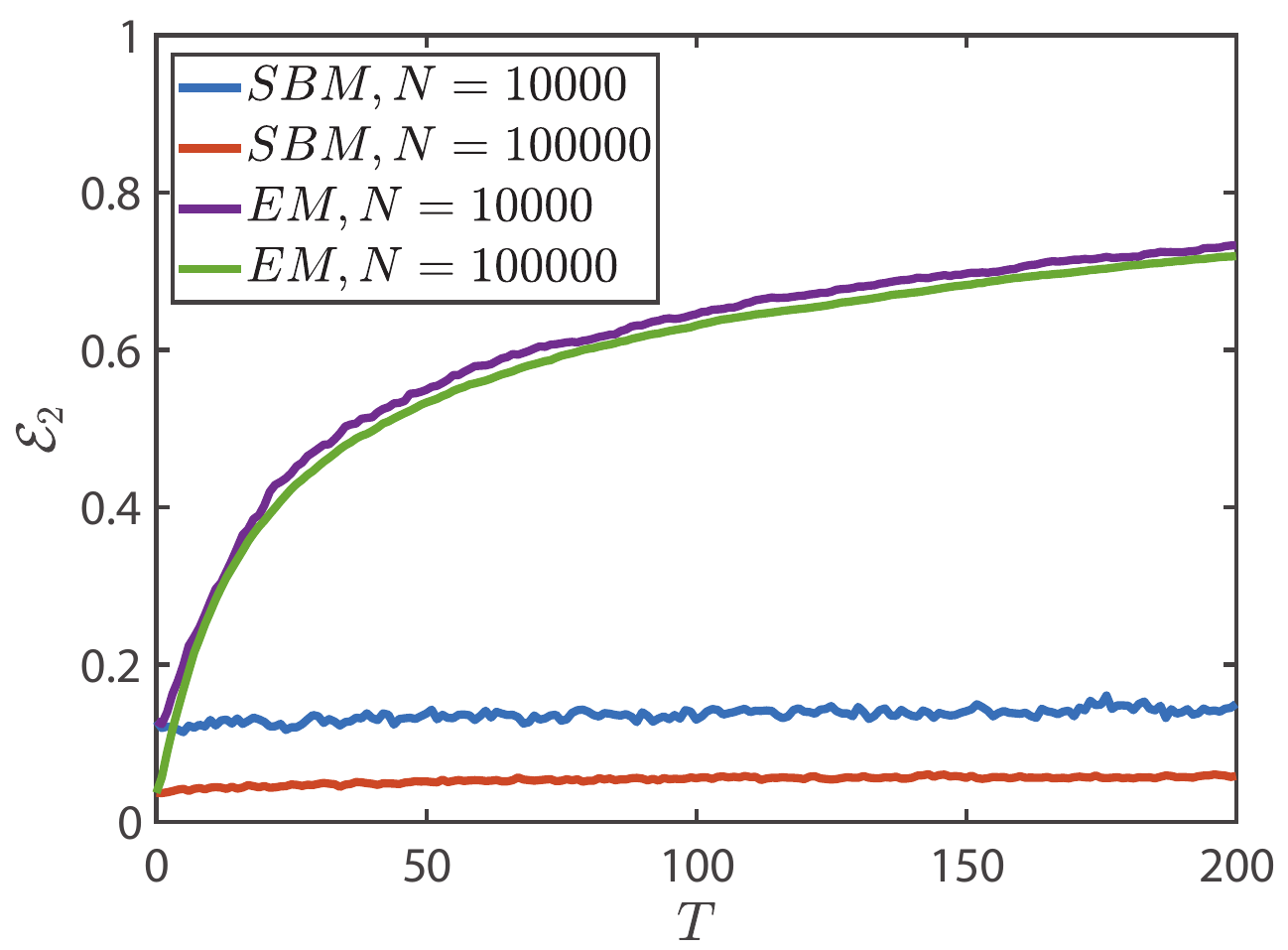}
        \label{Relative L2 error of different N 2d Coulomb case}
    }
           \centering
        \subfigure[Energy (the energy of SBM is a constant)]{
        \centering
        \includegraphics[width=0.42\textwidth,height=0.42\textwidth]{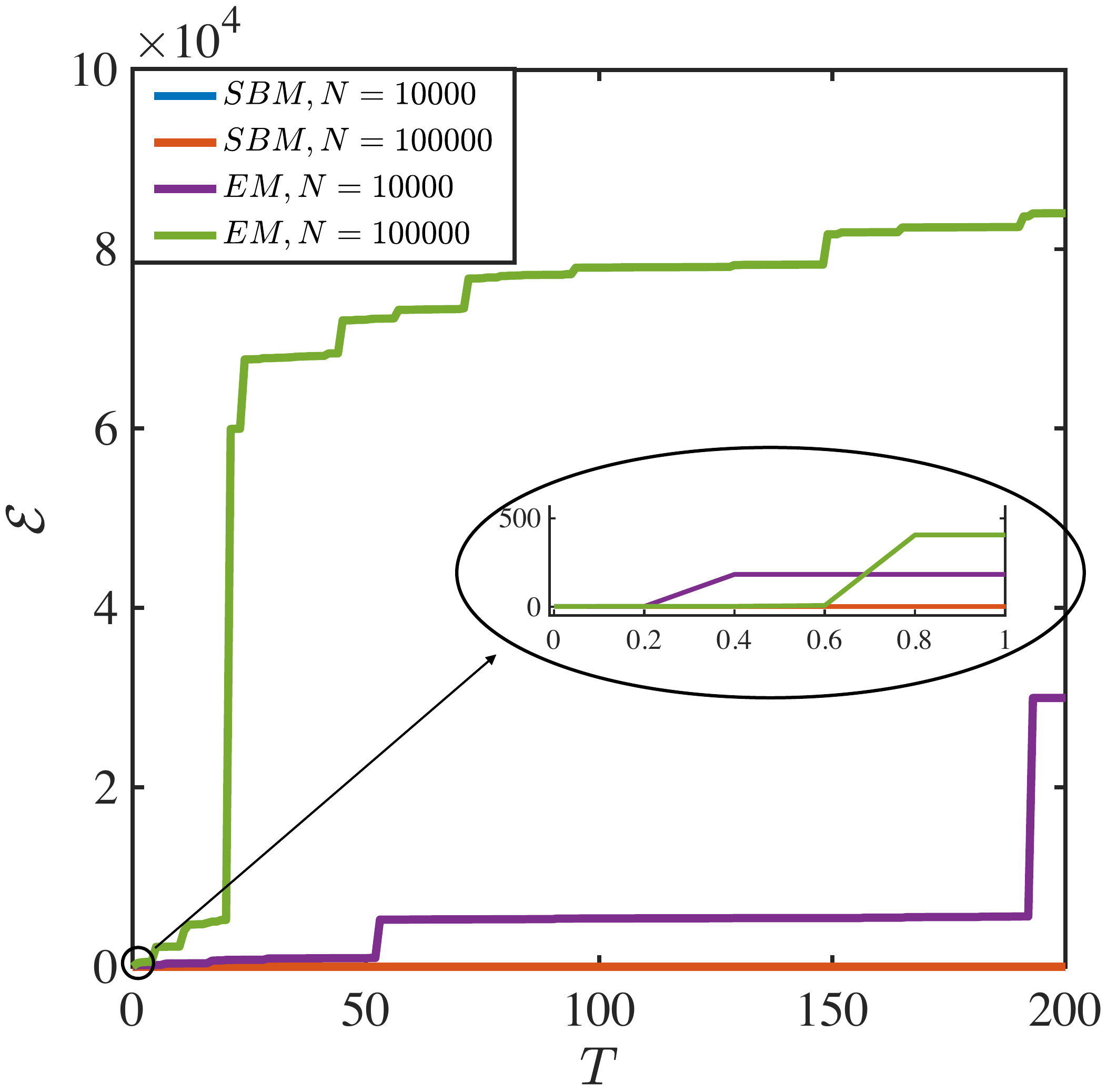}
        \label{energy of different N 2d Coulomb case}
    }
		\centering
		\caption{Time evolution of relative $L_2$ error, energy for different $N$ , where (a)(b) shows the results of both SBM and EM schemes.}
		\label{Comparison of different $N$ using SBM method 2d Coulomb case}
    \end{figure}

    \begin{figure}[!t]
        \subfigure[Order of accuracy for 2D Coulomb at $t = 5$]{
        \centering
        \includegraphics[height=50mm]{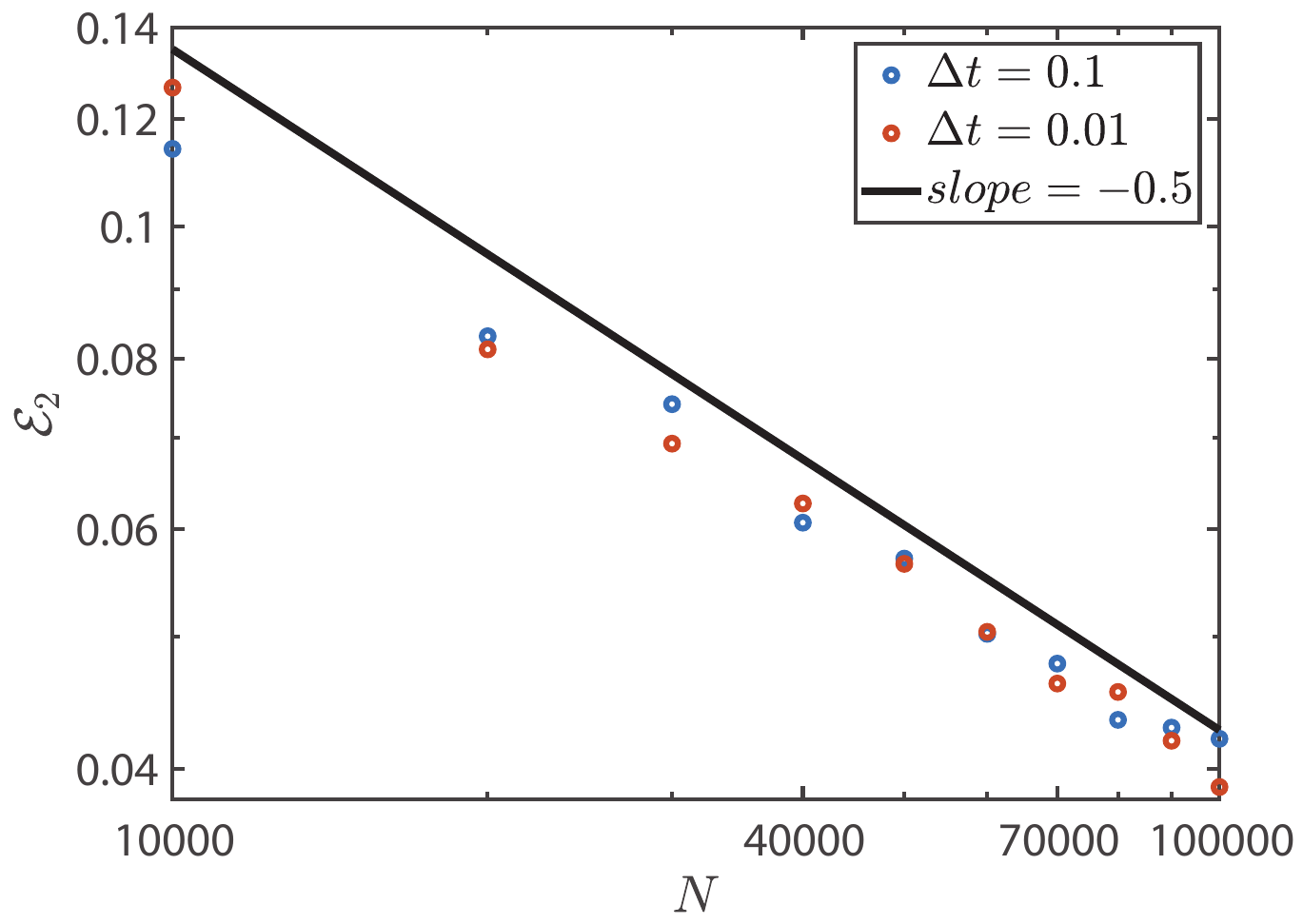}
        \label{relative L2 error at T=5 versus N 2d Coulomb case}
    }
		\subfigure[CPU time (in seconds) per time step with respect to particle
number $N$]{
        \centering
        \includegraphics[height=50mm]{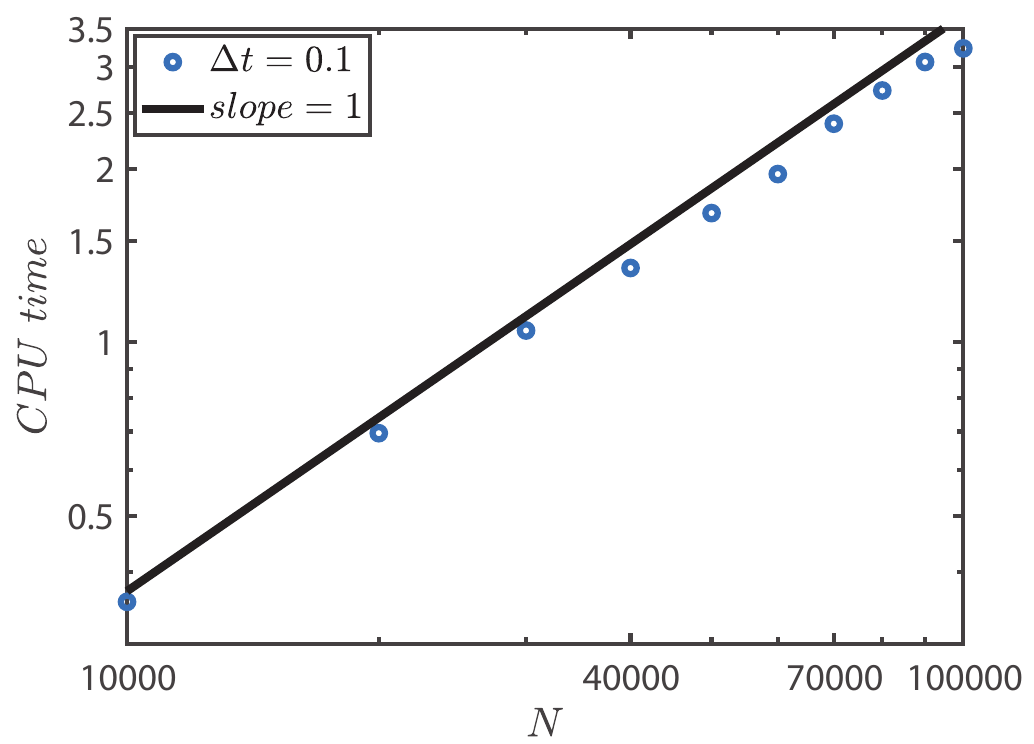}
        \label{CPU2dcoulomb}
    }
		\centering
		\caption{ Convergence order (left) and CPU time (right) of SBM scheme}
		\label{convergence order and CPU time of 2d coulomb}
    \end{figure}

\subsection{3D BKW solution for Maxwell molecules}
For $d=3$ and $\gamma=0$, the collision kernel is
$$ A(z)=\frac{1}{12}\big(|z|^{2}I_d-z\otimes z\big),$$
and the BKW solution is given by
$$ f(t,v)=\frac{1}{(2\pi K)^{1.5}}\Big(2.5-\frac{3}{2K}+\frac{1-K}{2K^{2}}|v|^{2}\Big)\exp\Big(-\frac{|v|^{2}}{2K}\Big),\quad
K=1-\exp\Big(-\frac{t}{6}\Big).$$ 

Similar to the \(2D\) case, we first compare the numerical results obtained using the SBM and EM schemes. 
With \(t_0 = -6\ln(0.4)\) (such that $K=0.6$), \(t_{\text{end}} = t_0+200\), \(\Delta t = 0.1\), \(N = 50,000\) and \(500,000\), the comparison results are shown in Figure~\ref{Comparison of different $N$ using SBM method 3d Maxwell case}.
From Figure~\ref{Relative L2 error of different N 3d Maxwell case}, the relative \(L_2\) errors for the SBM scheme remain generally stable during $[t_0,t_\text{end}]$, and the errors for the EM scheme initially resemble that of the SBM, but it gradually increases over time.
It can also be observed that as the number of particles increases, the relative error for both SBM and EM scheme decreases.
As shown in Figure~\ref{energy of different N 3d Maxwell case}, the EM scheme does not preserve energy, whereas the SBM scheme successfully maintains energy conservation.


 \begin{figure}[!t]
        \centering
        \subfigure[Relative $L_2$ error]{
        \centering
        \includegraphics[scale=0.32]{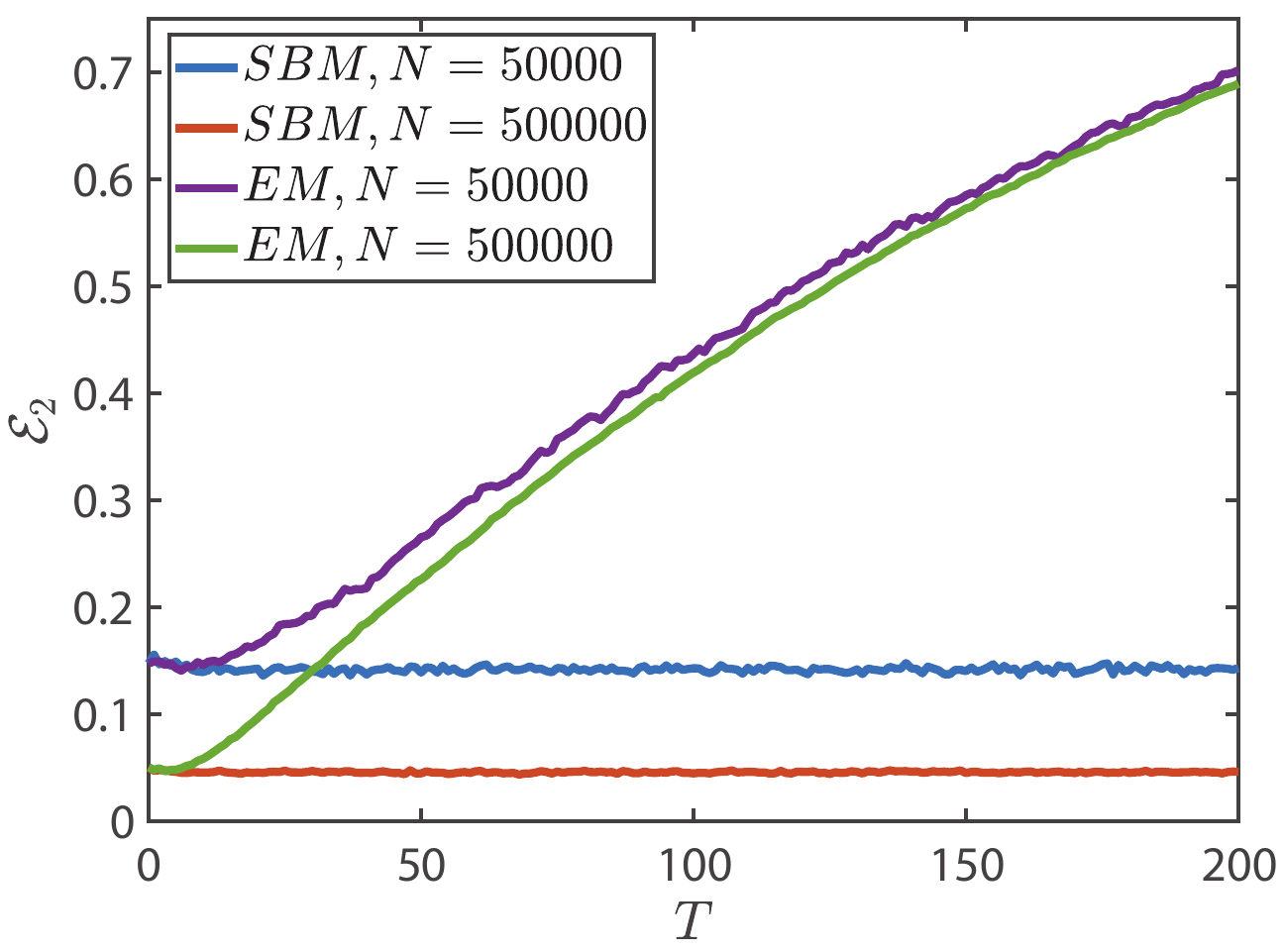}
        \label{Relative L2 error of different N 3d Maxwell case}
    }
    \centering
         \subfigure[Energy]{
        \centering
        \includegraphics[scale=0.32]{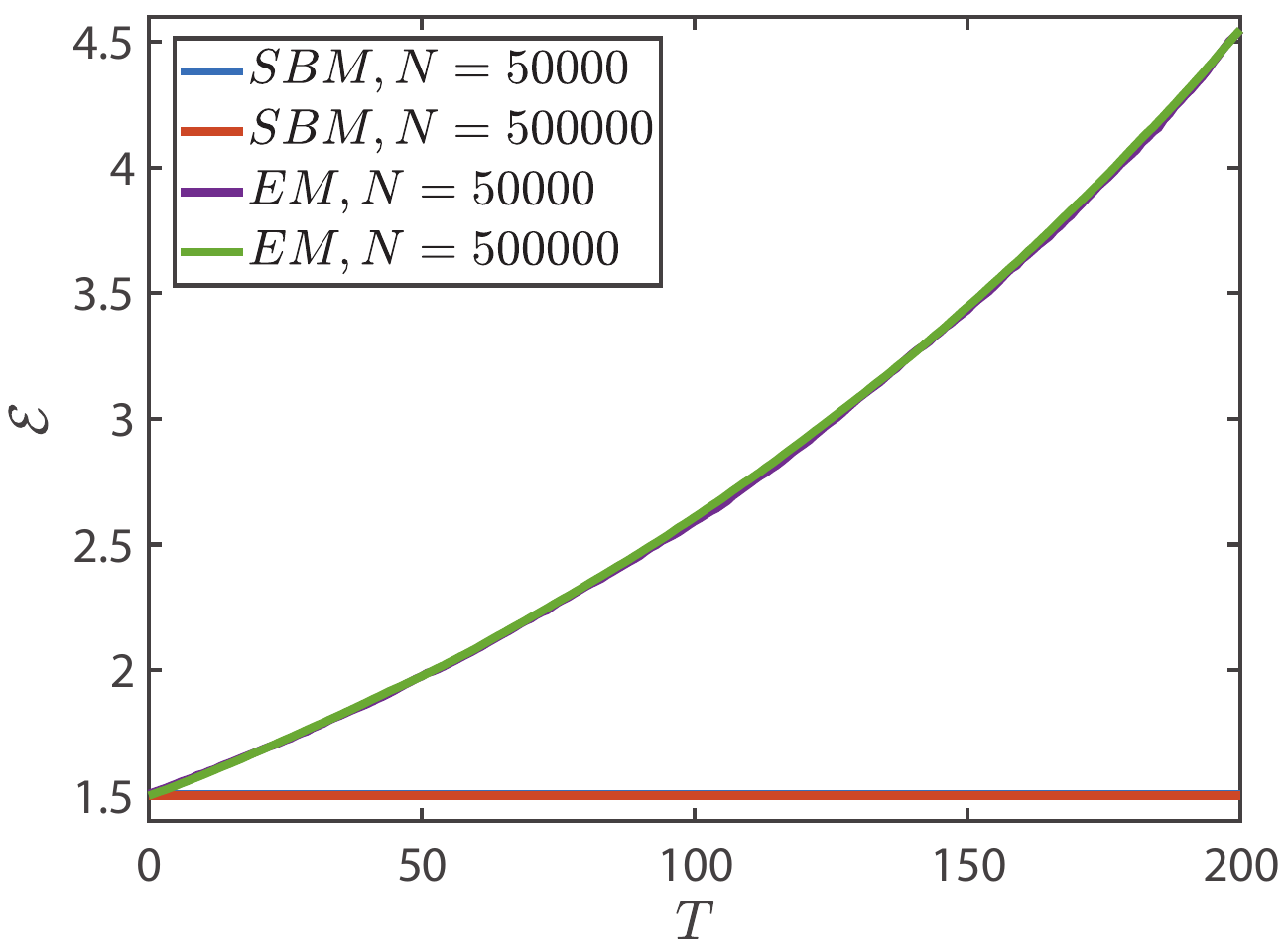}
        \label{energy of different N 3d Maxwell case}
    }
        \centering
		\caption{Time evolution of relative $L_2$ error and energy for different $N$.}
		\label{Comparison of different $N$ using SBM method 3d Maxwell case}
    \end{figure}

This example demonstrates that our method can effectively solve higher-dimensional Landau equations, especially we can achieve long-term stability and energy conservation if we use SBM scheme.
Furthermore, the approximation improves as the number of particles increases.

\subsection{A Vlasov--Poisson--Landau example with Coulomb potential}

Next, we demonstrate the effectiveness of our method when extended to spatially nonhomogeneous cases. In particular, we validate the phenomenon of Landau damping, demonstrating the reliability and accuracy of our approach.

We will use Algorithm \ref{PIC+Landau collision} to solve the Vlasov--Poisson--Landau equation. The electric field is initialized by Algorithm \ref{PIC solve electric field} in this example while the implicit scheme \eqref{V--A iteration} will be solved approximately by Algorithm \ref{alg:CNimplicitsolver}.

\begin{algorithm}[!h]
    \caption{Solve electric field in grid centers}
    \label{PIC solve electric field}
    \begin{algorithmic}[1]
        \REQUIRE 
        Number of particles $N$, particle positions $x_i \, (i=1,\cdots,N)$, particle charge $q$, number of spatial grids $n_0$, and grid size $\Delta x$.        
        \STATE Compute the charge density at the grid centers:
        \[
        \rho(x_k) = \sum_{i=1}^N \hat{S}(x_i - x_k) \frac{q}{\Delta x}, 
        \quad \rho_{\text{ion}} = \frac{1}{n_0} \sum_{k=1}^{n_0} \rho(x_k).
        \]
        \STATE Solve the Poisson equation using a spectral scheme with periodic boundary conditions:
        \[
        -\Delta \phi = \rho - \rho_{\text{ion}}.
        \]
        \STATE Compute the electric field at the grid centers:
        \[
        E_k = -\nabla_x \phi(x_k), \quad k = 1, \cdots, n_0.
        \]
    \end{algorithmic}
\end{algorithm}

\begin{algorithm}[!h]
		\caption{A possible iterative method for solving \eqref{V--A iteration}}
        \label{alg:CNimplicitsolver}
  \begin{algorithmic}[1]
          \REQUIRE Iteration number $n$, electric field $E^m(x_k)$ on grids
         \STATE Compute the electric field at particle positions and the predicted particles by Euler scheme:
          \[
          E(x^{m}_i) = \sum\limits_{k=1}^{n_0} E^m(x_k) \hat{S}(x_k - x^{m}_i),\quad 
          x^{\text{guess}}_i = x^{m}_i + v^{m}_i \Delta t, \quad v^{\text{guess}}_i = v^{m}_i + E(x^{m}_i) \Delta t.
          \]
          \STATE Refine the electric field \(E^{\text{guess}}\), velocities \(v^{\text{guess}}_i\), and positions \(x^{\text{guess}}_i\)  over \(n\) iterations, using \eqref{V--A iteration} by replacing the values at $t^{m+1}$ on the right hand side with the previous guess.
          \STATE Update:
          \[
          E^{m+1} = E^{\text{guess}}, \quad x^{m+1}_i = x^{\text{guess}}_i \mod 2L, \quad v^{m+1}_i = v^{\text{guess}}_i, \ (i = 1,\cdots,N).
          \]
  \end{algorithmic}		
\end{algorithm}

We consider the Coulomb case (\(\gamma = -d = -2\)), where the collision kernel is defined as 
\[
A(z) = \frac{\Lambda}{|z|^2}\big(|z|^2 I_d - z \otimes z\big),
\]
with \(\Lambda = 0\) and \(\Lambda = 1\) representing the non-collision and strong collision cases, respectively. 
For simplicity, we assume that particles are uniformly distributed in the second spatial dimension, leading to homogeneity in this direction. 
As a result, the problem reduces to a scenario with one spatial dimension and two velocity dimensions (1D-2V).

The initial distribution is specified as a Maxwellian equilibrium with a spatial perturbation:
\[
f(x, v_x, v_y) = \frac{1 + \alpha \cos(0.5x)}{2\pi} \exp\Big(-\frac{v_x^2 + v_y^2}{2}\Big),
\]
where \(\alpha = 0.1, 0.5\). A larger \(\alpha\) corresponds to a stronger spatial perturbation.

The parameters are set as follows: \(t_0 = 0\), \(t_\text{end} = 50\), \(\Delta t = 0.02\), \(\Omega_x = [0, 4\pi]\), and \(\Omega_v = [-2\pi, 2\pi]^2\). 
For the spatial domain \(\Omega_x\), we use \(n_0 = 128\) grid points, while for each velocity dimension in \(\Omega_v\), we also use \(n_0 = 128\) grid points. Here $|\Omega_k|=\Delta x.$ 
The total number of particles is \(N = 500,000\), and \(n = 5\). 
Initial particle positions and velocities are sampled independently from the specified initial distribution.
    
For the shape function, we use \(S(x) = \hat{S}(x) / \Delta x\), where
\[
\hat{S}(x) =
\begin{cases}
\left(1 - \dfrac{|x|}{\Delta x}\right) & \text{if } |x| \leq \Delta x, \\
0 & \text{otherwise.}
\end{cases}
\]
For ~\eqref{charge density int} and ~\eqref{electric field int}, we can just take $\hat{S}(x)$ instead of $S(x)$ in these two equations \cite{DEROUILLAT2018351}, which are of the form:
\[
\begin{split}
&\rho(x_k)=\sum\limits_{i} \hat{S}(x_{i}-x_k)\frac{q}{|\Delta x|}-\rho_0,\\
& E(x_i)=\sum\limits_{k} E(x_k) \hat{S}(x_{k}-x_i).
\end{split}
\]

The detailed steps of the algorithm are provided in Algorithm~\ref{PIC+Landau collision}, which incorporates Algorithm~\ref{PIC solve electric field} for calculating the initial electric field and Algorithm~\ref{alg:CNimplicitsolver} for iterative solutions of the Vlasov--Amp\`ere equation. The results are presented in Figure~\ref{electric field L2 norm and total energy using spherical BM method}, showing the evolution of the electric field \(L^2\)-norm \(\|E\|_{L^2}\) and the total energy \(\mathcal{E}_{\text{total}}\) for different values of \(\alpha\) and \(\Lambda\). 
Our method, which combines the SBM scheme with the energy-conserving PIC algorithm, performs robustly across varying \(\alpha\) and \(\Lambda\). 
It preserves the conservation of the total energy \(\mathcal{E}_{\text{total}}\), as illustrated in Figure~\ref{total energy when alpha=0.1, alpha=0.5 1D2V Landau damping}. 
Furthermore, the damping behavior of the electric field \(E\) is clearly observed in Figures~\ref{electric field L2 norm when alpha=0.1 1D2V Landau damping} and \ref{electric field L2 norm when alpha=0.5 1D2V Landau damping}. For \(\Lambda=0\), \(\alpha=0.1 \text{ and } \alpha=0.5\) corresponds to linear Landau damping and strong nonlinear Landau damping, respectively. And the behavior of the electric field \(E\) is in good agreement with the numerical solutions in the previous researches \cite{FILBET2001166,NAKAMURA1999122}.

This example demonstrates that our SBM scheme can be effectively integrated with the energy-conserving PIC algorithm, providing an accurate and reliable solution for spatially non-homogeneous problems, such as the Vlasov--Poisson--Landau equation, while strictly maintaining key physical properties.

\begin{figure}[!t]
    \centering
        \subfigure[Electric field $L_2$ norm ($\alpha=0.1$)]{
        \centering
        \includegraphics[scale=0.22]{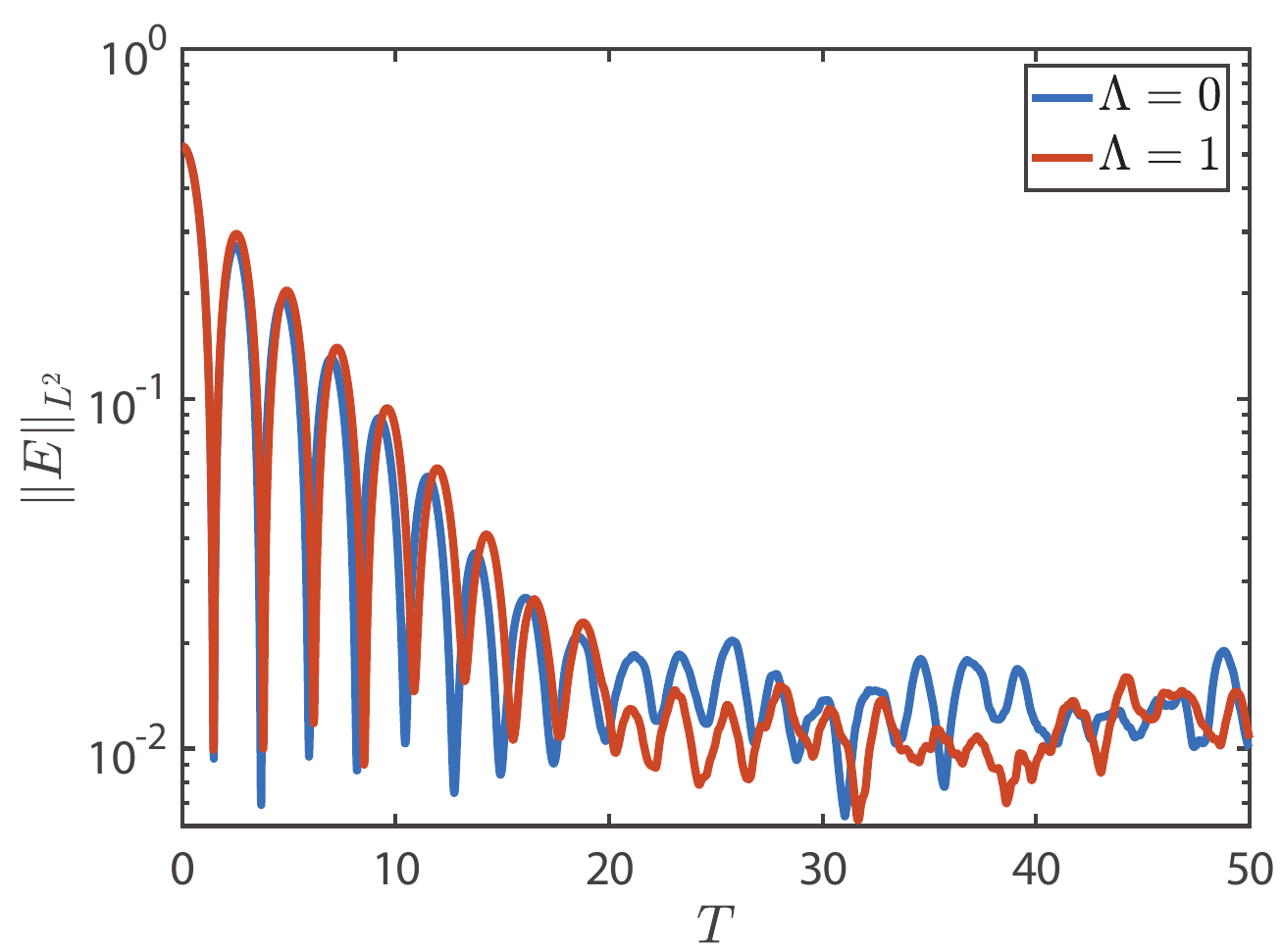}
        \label{electric field L2 norm when alpha=0.1 1D2V Landau damping}
}
        \centering
        \subfigure[Electric field $L_2$ norm ($\alpha=0.5$)]{
        \centering
        \includegraphics[scale=0.22]{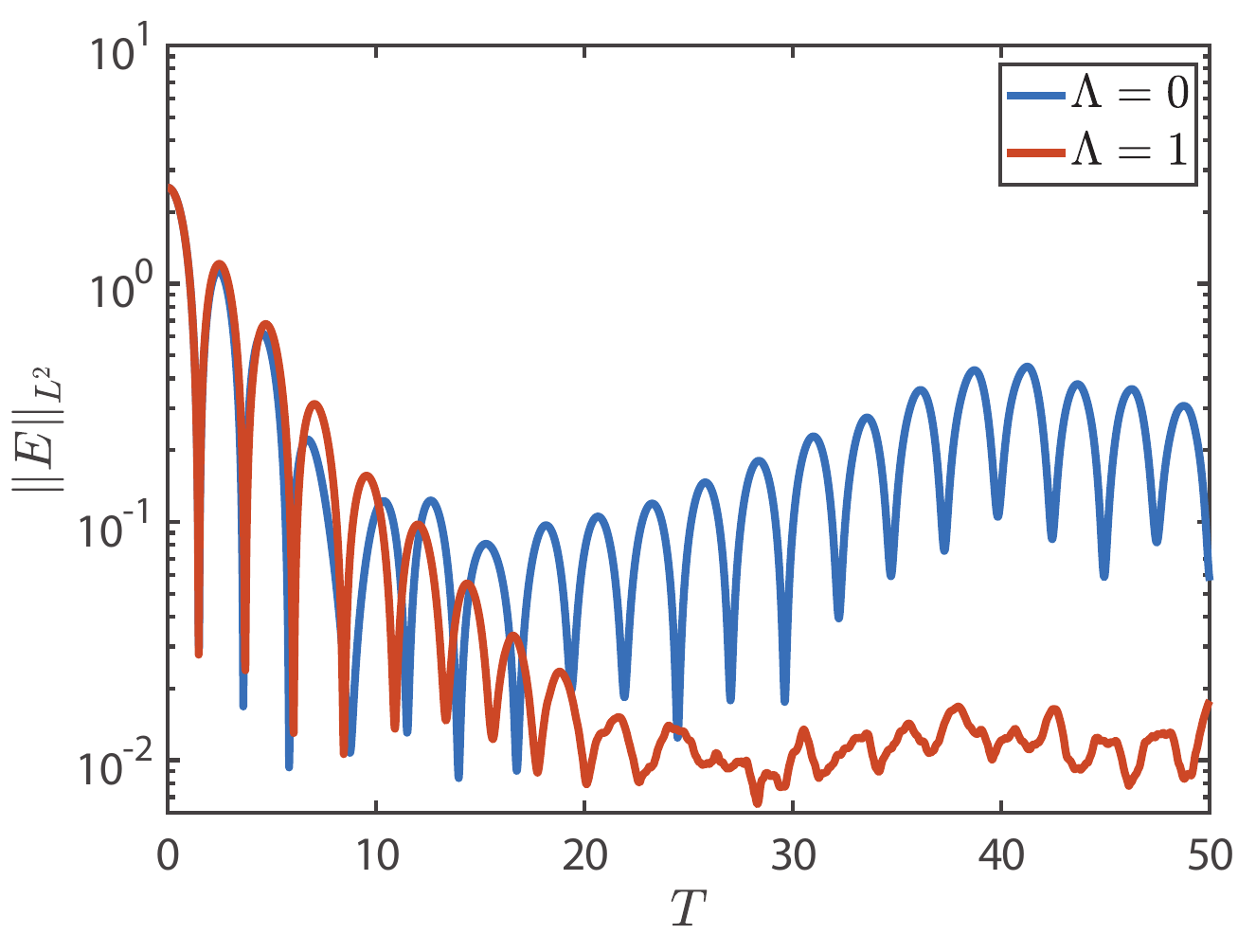}
        \label{electric field L2 norm when alpha=0.5 1D2V Landau damping}
}
        \centering
		\subfigure[Total energy ($\alpha=0.1,\,0.5$)]{
        \centering
        \includegraphics[scale=0.22]{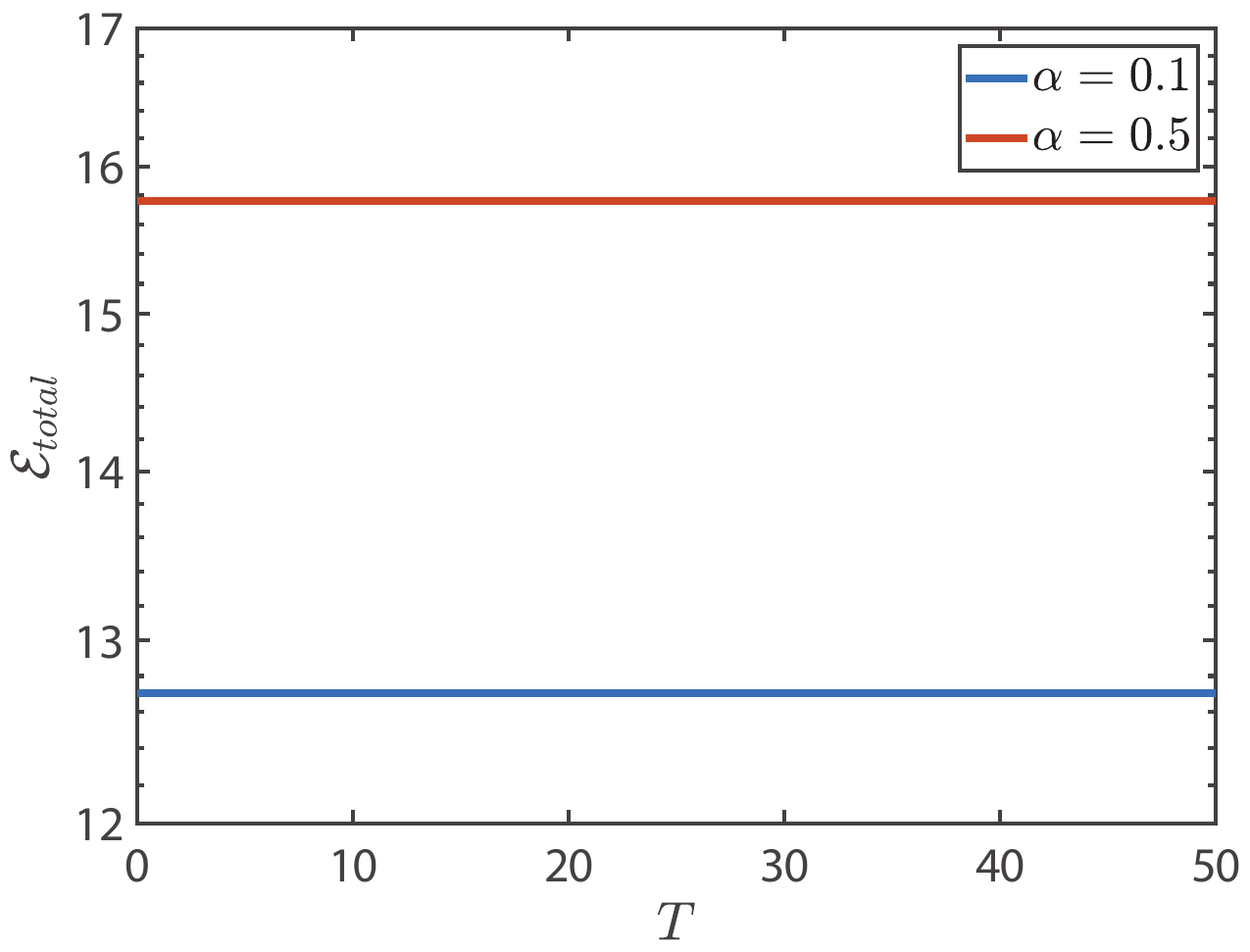}
        \label{total energy when alpha=0.1, alpha=0.5 1D2V Landau damping}
    }
    
	\centering
        \caption{(a,b) Electric field $L_2$ norm using SBM scheme for $\alpha=0.1, 0.5$; (c) Total energy using the SBM scheme. Note that the curves for $\Lambda=0$ and $\Lambda=1$ stacked together.}
        \label{electric field L2 norm and total energy using spherical BM method}
    \end{figure}

\section{Conclusion}

In this work, we introduced a stochastic particle method for the Landau kinetic equation that preserves key physical properties such as mass, momentum, energy conservation, and entropy dissipation. 
By modeling pairwise grazing collisions as diffusion processes, this method provides a simple yet effective way to simulate collisional plasmas. 
The exact temporal discretization ensures that the discrete system remains consistent with the continuous model, offering reliability in long-term simulations.

Numerical experiments have demonstrated accuracy and stability of our method across different scenarios, including Coulomb potentials and spatially non-homogeneous systems. 
Its computational efficiency, with $O(N)$ complexity per time step, makes it suitable for larger-scale applications. 
Nevertheless, as with any numerical method, the performance depends on the specific problem settings, and further investigation is needed to evaluate its limitations and applicability to more complex systems.

While the proposed method shows promise, there is still room for improvement and further exploration. 
For instance, analyzing its convergence under more general conditions and adapting it for problems with additional physical effects, such as external fields or anisotropic interactions, could extend its usefulness. 
Despite its current focus, the method offers a potential foundation for developing more robust tools in kinetic theory.

\section*{Acknowledgement}

This work was financially supported by the National Key R\&D Program of China, Project Number 2021YFA1002800.
The work of K. Du was supported by the National Natural Science Foundation of China (12222103).
The work of L. Li was partially supported by NSFC 12371400 and 12031013,  Shanghai Municipal Science and Technology Major Project 2021SHZDZX0102.

\bibliographystyle{plain}
\bibliography{landau}

\end{document}